\newlist{steps}{enumerate}{1}
\setlist[steps, 1]{label = Step \arabic*}
\newlist{primenumerate}{enumerate}{1}
\setlist[primenumerate,1]{label={(\arabic*$'$)}}
\newcommand{\set}[1]{ \left\{ #1 \right\} }
\newcommand{\aint}{\hat{\iota}}
\newcommand{\GG}{\mathcal{G}}
\newcommand{\extG}{\widehat{\GG}}
\newcommand{\as}{\alpha_*}
\newcommand{\bs}{\beta_*}
\newcommand{\gs}{\gamma_*}
\newcommand{\ds}{\delta_*}
\newcommand{\es}{\varepsilon_*}
\newcommand{\goodbasis}{slope basis}
\title{Genus two Goeritz equivalence in $S^3$}
\author{Brandy Doleshal}
\address{Brandy Doleshal
\newline Sam Houston State University
\newline Huntsville, TX
\newline USA}
\email{bdoleshal@shsu.edu}
\urladdr{}
\author{Matt Rathbun}
\address{Matt Rathbun
\newline California State University, Fullerton
\newline Fullerton, CA
\newline USA}
\email{mrathbun@fullerton.edu}
\urladdr{}
\newtheorem{thm}{Theorem}[section]    
\newtheorem{lem}[thm]{Lemma}          
\theoremstyle{definition}
\newtheorem{defn}[thm]{Definition}    
\newtheorem{rem}{Remark}
\newtheorem{prop}[thm]{Proposition}
\newtheorem{cor}[thm]{Corollary}
\newtheorem*{question}{Question}
\begin{document}

\begin{abstract}    
The Goeritz group of a genus $g$ Heegaard splitting of a 3-manifold is the group of isotopy classes of orientation-preserving automorphisms of the manifold that preserve the Heegaard splitting. In the context of the standard genus 2 Heegaard splitting of $S^3$, we introduce the concept of Goeritz equivalence of curves, present two algebraic obstructions to Goeritz equivalence of simple closed curves that are straightforward to compute, and provide families of examples demonstrating how these obstructions may be used.
\end{abstract}

\maketitle


\section{Introduction}
\label{section:Introduction}

The Goeritz group of a genus $g$ Heegaard splitting of a 3-manifold was defined by Goeritz \cite{GoeDAdBudVvG} as the group of isotopy classes of orientation-preserving automorphisms of the manifold preserving the Heegaard splitting (set-wise). Scharlemann \cite{SchaA3SPG2HS}, Akbas \cite{AkbPA3SPG2HS}, Cho \cite{ChoH3SPHSG2}, and Powell \cite{PowHS3lHsi} have done work to define generators for the Goeritz group in order to solve a foundational question of whether the Goeritz group for the genus $g$ splitting of $S^3$ is finitely generated. 

We focus on the Goeritz group in the setting of a genus $2$ Heegaard splitting, $F$, of $S^3$. In \cite{BergeUnpub}, Berge finds distinct primitive/primitive knots that have the same lens space surgery at the same surface slope. It is unknown whether there exist pairs of distinct primitive/Seifert knots that have the same Seifert fibered space surgery at the same surface slope. However in \cite{GunKDPPPSR}, \cite{DolFPSTTK} and \cite{AmoDolRatACPTTK}, the authors, along with Evan Amoranto, find infinite families of primitive/Seifert knots that have distinct positions on the genus 2 surface. Infinite families with such distinct positions were also found by Eudave-Mu\~{n}oz, Miyazaki, and Motegi \cite{EudMunMiyMotSFSDPSP}. While this previous work was in the service of understanding Dehn surgery, we zoom out from the specialized primitive/primitive and primitive/Seifert constructions to consider any non-separating curve in the standard genus 2 Heegaard surface, $F$, for $S^3$. However, in light of this context, we will also allow for automorphisms that exchange the two handlebodies, considering the \emph{extended Goeritz group}.

We seek to determine when there exist elements of the Goeritz group, or the extended Goeritz group, that send one given curve in the standard genus 2 Heegaard splitting of $S^3$ to another. If such an element from the Goeritz group exists, we call the two curves \emph{Goeritz equivalent}.

Observe that two curves that are Goeritz equivalent represent the same knot, but that the same knot may be represented by multiple Goeritz equivalence classes.  In particular, the authors' previous work with primitive/Seifert knots shows that primitive/Seifert knots can belong to two distinct Goeritz equivalence classes. Understanding Goeritz equivalence and the Goeritz equivalence classes can lead to understanding questions about distinct representatives of primitive/Seifert knots and much more. The goal of this work is to move toward a better understanding of Goeritz equivalence. 

To that end, we establish a simple quantity called the \emph{split product}, that computes the surface slope of a curve.

\begin{restatable}{thm}{SplitProductSurfaceSlope} \label{thm:SplitProductSurfaceSlope} Suppose $K \subset F$ is a non-separating simple closed curve. The split product of $[K]  \in H_1(F)$ with respect to any \goodbasis \ is equal to the surface slope of $K$ with respect to $F$.
\end{restatable}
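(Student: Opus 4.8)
The plan is to reinterpret the surface slope of $K$ as a single linking number, to split that linking number across the two handlebodies of the splitting, and then to evaluate it homologically against a \goodbasis. Write $S^{3} = V \cup_{F} W$ for the standard genus two splitting, and recall that the surface slope $\mathrm{sl}(K)$ equals $\operatorname{lk}(K, K^{+})$, where $K^{+}$ is the push-off of $K$ along $F$ --- a longitude on $\partial N(K)$ induced by $F$. First I would observe that the normal push-offs $K_{V} \subset \operatorname{int} V$ and $K_{W} \subset \operatorname{int} W$ of $K$ off of $F$ both realize the surface framing: on $\partial N(K)$ the curves $K^{+}$, $K_{V}$, $K_{W}$ are mutually isotopic longitudes. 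Since two parallel copies of a framed knot link in exactly the framing integer, this gives $\mathrm{sl}(K) = \operatorname{lk}(K, K^{+}) = \operatorname{lk}(K_{V}, K_{W})$. Second, I would note that for oriented $1$-cycles $c \subset \operatorname{int} V$ and $c' \subset \operatorname{int} W$ the integer $\operatorname{lk}(c, c')$ depends only on $[c] \in H_{1}(V)$ and $[c'] \in H_{1}(W)$: if $[c] = [c'']$ in $H_{1}(V)$, then $c - c''$ bounds a $2$-chain in $V$, which is disjoint from $c' \subset \operatorname{int} W$, so the linking numbers agree. Thus $\operatorname{lk}$ descends to an integer-valued bilinear pairing $L$ on $H_{1}(V) \times H_{1}(W)$ with $\mathrm{sl}(K) = L\bigl([K_{V}],[K_{W}]\bigr)$.

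The next step is to evaluate $L$ on a \goodbasis \ $\{\alpha_{1}, \beta_{1}, \alpha_{2}, \beta_{2}\}$ of $H_{1}(F)$, using the complete meridian systems it determines: disks $D_{1}, D_{2} \subset V$ with $\partial D_{i} = \alpha_{i}$, and disks $E_{1}, E_{2} \subset W$ with $\partial E_{i} = \beta_{i}$. The group $H_{1}(V) \cong \mathbf{Z}^{2}$ is generated by the core classes $x_{1}, x_{2}$ of the two $1$-handles of $V$, and under $H_{1}(F) \to H_{1}(V)$ one has $[\alpha_{i}] \mapsto 0$ and $[\beta_{i}] \mapsto x_{i}$; dually $H_{1}(W) \cong \mathbf{Z}^{2} = \langle y_{1}, y_{2}\rangle$ with $[\beta_{i}] \mapsto 0$ and $[\alpha_{i}] \mapsto y_{i}$. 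Hence, writing $[K] = a_{1}[\alpha_{1}] + b_{1}[\beta_{1}] + a_{2}[\alpha_{2}] + b_{2}[\beta_{2}]$, we get $[K_{V}] = b_{1} x_{1} + b_{2} x_{2}$ and $[K_{W}] = a_{1} y_{1} + a_{2} y_{2}$. To compute $L(x_{i}, y_{j})$, represent $x_{i}$ by $\beta_{i}$ pushed into $\operatorname{int} V$ and pair it with $y_{j}$, represented by $\alpha_{j}$ pushed into $\operatorname{int} W$: the linking number is the algebraic intersection of (the push-off of) $\beta_{i}$ with the disk $D_{j}$, and this is $\delta_{ij}$ since the core of handle $i$ meets $D_{j}$ once when $i = j$ and can be isotoped off of it when $i \neq j$. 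Bilinearity now gives $\mathrm{sl}(K) = \sum_{i,j} b_{i} a_{j}\, L(x_{i}, y_{j}) = a_{1} b_{1} + a_{2} b_{2}$, which is precisely the split product of $[K]$ with respect to the \goodbasis; and because the left side never mentioned the basis, independence of the choice of \goodbasis \ follows immediately.

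The step I expect to be the main obstacle is not this homological computation but the orientation and sign bookkeeping around it. One has to choose mutually compatible orientations of $F$, $V$, $W$, $K$, and the basis curves so that $[\beta_{i}] \mapsto +x_{i}$ and $L(x_{i}, y_{i}) = +1$ (not $-1$), in line with whatever sign convention is folded into the definition of the split product, and the principle ``the normal push-off realizes the surface framing'' must be invoked with the orientation under which $\operatorname{lk}(K, K^{+})$, rather than its negative, is the surface slope. A secondary, easily overlooked point is to confirm that the hypotheses used above --- that the $\alpha_{i}$ bound a complete disk system in $V$, the $\beta_{i}$ one in $W$, so that $L$ is the identity matrix in these bases --- are exactly what the definition of a \goodbasis \ provides; if that definition is stated in a different form, one must first extract from it such a compatible pair of disk systems. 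I would expect the non-separating hypothesis to enter only through the ambient context (it guarantees that $[K]$ is nonzero and can be completed to part of a \goodbasis), since the argument above is purely homological.
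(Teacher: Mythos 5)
Your proof is correct in its essentials, but it takes a genuinely different route from the paper's. The paper first proves the standard-basis case (Theorem \ref{thm:sp=ss}) by a transitivity argument: it verifies the claim for a meridian, then checks that Dehn twisting along each of five mapping class group generators changes the split product and the surface slope by the same amount, using the fact that a twist along $c_j$ shifts the surface slope by $\pm lk(K, c_j^{\pm})^2$ (Remark \ref{rem:twistypoo}); it then handles an arbitrary \goodbasis{} by showing that the change-of-basis matrix from the standard basis is forced into Goeritz form by the disk-bounding and symplectic conditions, and invoking the invariance of the split product under such matrices (Proposition \ref{prop:SplitProductPreserved}). You instead identify both quantities at once with the linking number $lk(K_H, K_{H'})$ of the two normal push-offs of $K$, observe that linking of cycles in the interiors of the two handlebodies descends to a bilinear pairing $H_1(H) \times H_1(H') \to \mathbb{Z}$, and check that the dual bases furnished by a \goodbasis{} diagonalize that pairing, since the linking number of a pushed-off disk-bounding curve with a curve from the other family is computed by the algebraic intersection with the bounding disk, i.e., by $\aint(a', b') = \delta_{ij}$. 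Your argument is more conceptual and manifestly basis-independent --- it explains \emph{why} the two quantities agree rather than verifying it generator by generator, and it needs no input about generation or transitivity of the mapping class group --- while the paper's route has the advantage of exercising machinery (Goeritz-form matrices, Proposition \ref{prop:SplitProductPreserved}) that is reused throughout the rest of the paper. Two small points to tighten in your write-up: the identification of the images of the second pair of basis curves with ``cores of the $1$-handles'' is an unnecessary detour (all you need is that they map to a basis of $H_1$ of the relevant handlebody, which follows because the first pair lies in the kernel and the four classes form a basis of $H_1(F)$), and ``can be isotoped off of it when $i \neq j$'' should be weakened to ``has algebraic intersection zero,'' since only homological information is used and geometric disjointness is neither guaranteed nor needed. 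The orientation and sign bookkeeping you flag is real --- the paper's own symplectic conditions give $\aint(a',b') = 1$ but $\aint(b',a') = -1$, so the conventions for the push-off directions and for $lk$ must be fixed consistently to land on $+\delta_{ij}$ --- but it is routine and does not affect the soundness of the approach.
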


We then provide two algebraic obstructions to Goeritz equivalence.

The first is homological and arithmetic. We will say that two curves are \emph{homologically Goeritz equivalent} if there is an element of the Goeritz group whose induced action on homology takes the homology class of the first curve to that of the second. Equivalently, $K$ is homologically Goeritz equivalent to $K'$ if there is a Goeritz group element that takes $K$ to a curve homologous to $K'$.

\begin{restatable}{thm}{TheoremHomology}\label{thm:Homology}
Suppose $K$ and $K'$ are non-separating nontrivial simple closed curves in the standard genus two Heegaard surface, $F$, for $S^3$ with non-zero surface slope, and suppose $[K] = (a, x, b, y)^T$ and $[K'] = (a', x', b', y')^T$ with respect to any slope basis. Then $K$ and $K'$ are homologically Goeritz equivalent if and only if:
 \begin{enumerate} 
 \item \label{cond:firstgcd} $gcd(a,x) = gcd(a',x')$, 
 \item \label{cond:secondgcd} $gcd(b,y) = gcd(b',y')$, and 
 \item \label{cond:slopes} $ab+xy = a'b'+x'y'$,
\end{enumerate}
and either
\begin{enumerate}[start=4]
\item \label{cond:topleft} $a'b + xy'$,
\item \label{cond:topright} $a'y - ay'$, 
\item \label{cond:bottomleft} $bx' - b'x ,$ and 
\item \label{cond:bottomright} $x'y + ab'$ 
\end{enumerate}
are all multiples of $ab + xy$, or
\begin{primenumerate}[start=4]
\item \label{cond:topleftprime} $a'b - xy'$, 
\item \label{cond:toprightprime} $a'y + ay'$, 
\item \label{cond:bottomleftprime} $bx' + b'x ,$ and 
\item \label{cond:bottomrightprime} $x'y - ab'$
\end{primenumerate}
are all multiples of $ab + xy$.
\end{restatable}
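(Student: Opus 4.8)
The plan is to reduce the statement to a computation in $GL_2(\mathbb{Z})$ after first pinning down the homological action of the Goeritz group. Every element of the Goeritz group extends over both handlebodies $H_1,H_2$ of the splitting, so it preserves each Lagrangian $L_i = \ker\bigl(H_1(F)\to H_1(H_i)\bigr)$; with respect to a slope basis these are the coordinate $2$-planes spanned by $\{$first, second$\}$ and $\{$third, fourth$\}$ coordinates, the symplectic pairing between them is the identity matrix, and (by \Cref{thm:SplitProductSurfaceSlope}) the split product $ab+xy$ is the surface slope, which a Goeritz element must preserve. An orientation-preserving automorphism of $F$ preserving both coordinate $2$-planes and the symplectic form between them must therefore act by $(v_1,v_2)\mapsto\bigl(Av_1,(A^T)^{-1}v_2\bigr)$ for some $A=\left(\begin{smallmatrix}\alpha&\beta\\ \gamma&\delta\end{smallmatrix}\right)\in GL_2(\mathbb{Z})$, where $v_1=(a,x)^T$ and $v_2=(b,y)^T$. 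I would then prove the converse, that every such $A$ is realized by a Goeritz element, either by exhibiting explicit Goeritz automorphisms (a twist along a curve bounding disks on both sides, the hyperelliptic involution, a handle swap) whose homological actions generate $GL_2(\mathbb{Z})$, or by extracting this from the generators of the genus two Goeritz group in \cite{AkbPA3SPG2HS, ChoH3SPHSG2}. The upshot is that the homological image of the Goeritz group is exactly $\mathcal{L}:=\{A\oplus(A^T)^{-1}:A\in GL_2(\mathbb{Z})\}$, and the theorem becomes the arithmetic statement that $[K']\in\mathcal{L}\cdot[K]$ if and only if the listed conditions hold. Since any two slope bases differ by an element of $\mathcal{L}$, it suffices to argue with one fixed slope basis.

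For necessity, suppose $g$ acts as $A\oplus(A^T)^{-1}$ and sends $[K]$ to $[K']$. Then $Av_1=v_1'$ and $(A^T)^{-1}v_2=v_2'$ immediately give $\gcd(a,x)=\gcd(a',x')$ and $\gcd(b,y)=\gcd(b',y')$. Expanding $a'b'+x'y'$ and using $\det A=\pm1$ yields $a'b'+x'y'=ab+xy$, the slope condition. Finally, a short expansion shows that when $\det A=+1$ one has $a'b+xy'=\alpha(ab+xy)$, $a'y-ay'=\beta(ab+xy)$, $bx'-b'x=\gamma(ab+xy)$, and $x'y+ab'=\delta(ab+xy)$, which is conditions \ref{cond:topleft}--\ref{cond:bottomright}; when $\det A=-1$ the same entries $\alpha,\beta,\gamma,\delta$ appear instead as $a'b-xy'$, $a'y+ay'$, $bx'+b'x$, $x'y-ab'$ divided by $ab+xy$, which is conditions \ref{cond:topleftprime}--\ref{cond:bottomrightprime}. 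Either way the stated disjunction holds.

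For sufficiency, assume the first three conditions together with \ref{cond:topleft}--\ref{cond:bottomright} (the primed case is identical up to signs). Because the surface slope $s:=ab+xy$ is nonzero, I can form the matrix
\[
A=\frac{1}{s}\begin{pmatrix} a'b+xy' & a'y-ay' \\ bx'-b'x & x'y+ab'\end{pmatrix},
\]
whose entries are integers precisely by \ref{cond:topleft}--\ref{cond:bottomright}. A direct computation gives $\det A=(a'b'+x'y')/(ab+xy)$, which is $1$ by the slope condition, so $A\in SL_2(\mathbb{Z})$, and substituting back verifies $A(a,x)^T=(a',x')^T$ and $(A^T)^{-1}(b,y)^T=(b',y')^T$. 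Hence $A\oplus(A^T)^{-1}\in\mathcal{L}$ carries $[K]$ to $[K']$ and is realized by a Goeritz element, so $K$ and $K'$ are homologically Goeritz equivalent. In the primed case one uses $A=\tfrac{1}{s}\left(\begin{smallmatrix} a'b-xy' & a'y+ay' \\ bx'+b'x & x'y-ab'\end{smallmatrix}\right)$, for which the analogous computation gives $\det A=-1$ and the same two identities. (This argument in fact shows conditions \ref{cond:firstgcd} and \ref{cond:secondgcd} are consequences of the remaining ones; they are recorded because they are the cheapest necessary conditions to test in practice.)

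The polynomial identities above are routine, so the crux is the structural input of the first paragraph: identifying the homological image of the genus two Goeritz group of $S^3$ as the \emph{full} Levi subgroup $\mathcal{L}\cong GL_2(\mathbb{Z})$ with respect to a slope basis. The inclusion into $\mathcal{L}$ is soft, following from preservation of the two Lagrangians and the intersection form, but surjectivity requires producing Goeritz automorphisms whose homological actions generate all of $GL_2(\mathbb{Z})$ — including an element with $\det A=-1$, which is exactly what makes the primed alternative available — and this is where the explicit descriptions of the Goeritz group due to Scharlemann, Akbas, and Cho are needed.
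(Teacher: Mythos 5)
Your proposal is correct and takes essentially the same route as the paper. The paper establishes in Section \ref{subsection:ActionHomology} that the homological image of the Goeritz group is exactly the block-diagonal subgroup $\set{C \oplus (C^{-1})^T \, : \, C \in GL(2,\mathbb{Z})}$ by computing the induced matrices of Scharlemann's generators; it then invokes Lemma \ref{lem:GL2ZAction} for conditions (\ref{cond:firstgcd})--(\ref{cond:secondgcd}), Theorem \ref{thm:SplitProductSurfaceSlope} for condition (\ref{cond:slopes}), and Lemma \ref{lem:NonOrtho}, which solves the same linear system you solve and exhibits, for each determinant $d = \pm 1$, the unique rational candidate $\frac{1}{ab+xy}\begin{pmatrix} xy'd + a'b & -ay'd + a'y \\ -b'xd + bx' & ab'd + x'y \end{pmatrix}$, so that the two quadruples of divisibility conditions are precisely integrality of these two candidates. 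One caveat on your structural step: among the explicit automorphisms you propose for realizing all of $GL(2,\mathbb{Z})$, a twist along a curve bounding disks in both handlebodies is a reducing sphere twist and acts trivially on homology, while the hyperelliptic involution and a handle swap contribute only $-I$ and a permutation matrix, so that particular list generates only a finite subgroup; surjectivity genuinely requires a handle slide ($\delta$, acting by a transvection), which your fallback to the Akbas/Scharlemann generators does supply and which the paper makes explicit via $R_1 = p(\as\gs)$, $R_2 = p(\as\bs\ds)$, $R_3 = p(\as\bs)$. Your parenthetical observation that conditions (\ref{cond:firstgcd}) and (\ref{cond:secondgcd}) are implied by condition (\ref{cond:slopes}) together with either quadruple is correct and is not made in the paper.
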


To determine Goeritz equivalence of two simple closed curves, then, it suffices to consider equivalence between homologous curves. The next obstruction is a matter of homotopy, relative to the two handlebodies bounded by $F$, $H$ and $H'$, separately.

\begin{restatable}{thm}{TheoremHomotopy}\label{thm:Homotopy}
Suppose $K$ and $K'$ are homologous simple closed curves with non-zero surface slopes. If $K$ is not freely homotopic to $K'$ in $H$ or if $K$ is not freely homotopic to $K'$ in $H'$, then $K$ and $K'$ are not Goeritz equivalent.
\end{restatable}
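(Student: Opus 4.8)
The plan is to prove the contrapositive: assuming $K$ and $K'$ are Goeritz equivalent and, as hypothesized, homologous with non-zero surface slope, I will show that $K$ is freely homotopic to $K'$ in $H$ and also in $H'$. So fix an element $\varphi$ of the Goeritz group with $\varphi(K)=K'$ up to isotopy in $F$. Since $\varphi$ preserves the splitting and lies in the (non-extended) Goeritz group, it carries each handlebody to itself, hence restricts to homeomorphisms $\varphi|_H\colon H\to H$ and $\varphi|_{H'}\colon H'\to H'$, each carrying $K$ to $K'$. Because free homotopy classes of loops in a handlebody are precisely the conjugacy classes in its fundamental group --- here free of rank two --- it suffices to prove that the outer automorphism induced by $\varphi|_H$ on $\pi_1(H)$ fixes the conjugacy class of $K$, and likewise for $\varphi|_{H'}$; I treat $H$, the argument for $H'$ being identical with the roles of the handlebodies exchanged.

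Since $K$ and $K'$ are homologous in $F$, we have $\varphi_*[K]=[K]$ in $H_1(F)$, and as $H_1(F)$ embeds into $H_1(H)\oplus H_1(H')$ via the inclusion-induced maps, this amounts to saying the induced maps on $H_1(H)$ and on $H_1(H')$ fix the respective images of $[K]$. The heart of the proof --- and the step I expect to be the main obstacle --- is to upgrade this from homology to homotopy: to show that the outer automorphism $\varphi_*$ of $\pi_1(H)$ fixes the entire conjugacy class of $K$, not merely its class in $H_1(H)$. This is where the non-zero surface slope is used. Writing $[K]=(a,x,b,y)^T$ in a slope basis, Theorem~\ref{thm:SplitProductSurfaceSlope} identifies the surface slope with the split product $ab+xy$; combining $ab+xy\neq 0$ with the homological constraint on $\varphi$ --- and with the arithmetic behind Theorem~\ref{thm:Homology} --- should force $K$ into a standard position relative to a complete meridian system of $H$ or of $H'$, from which the $\varphi_*$-invariance of its conjugacy class is visible. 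Intuitively, a non-vanishing split product rules out the degenerate cases (for example, $K$ homologically trivial in $H$ yet carrying a nontrivial conjugacy class there) in which a handlebody homeomorphism fixing $[K]$ homologically could nonetheless move it up to free homotopy.

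Making this precise is the crux. It requires understanding which conjugacy classes in the rank-two free groups $\pi_1(H)$ and $\pi_1(H')$ can be carried by a single non-separating simple closed curve on $F$ realizing a prescribed homology class of non-zero split product, together with a rigidity statement for the Goeritz group: its infinite-order generators (the eyeglass twists and the twist along a curve bounding a disk in each handlebody) act on the conjugacy classes of $\pi_1(H)$ and $\pi_1(H')$ in a controlled way, so that one is ultimately reduced to checking the conclusion against the finitely many handlebody symmetries, which the homological and slope constraints then eliminate. Once this is done, $\varphi|_H$ and $\varphi|_{H'}$ fix the conjugacy classes of $K$, so $K'=\varphi(K)$ is freely homotopic to $K$ in $H$ and in $H'$; and if handlebody-exchanging elements are also permitted, the same analysis applies after composing with a fixed handlebody exchange.
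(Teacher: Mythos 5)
Your reduction is sound as far as it goes: a Goeritz element $\varphi$ with $\varphi(K)=K'$ restricts to homeomorphisms of $H$ and of $H'$, and $K$ is freely homotopic to $K'$ in $H$ exactly when the outer automorphism induced by $\varphi|_H$ on $\pi_1(H)$ fixes the conjugacy class of $K$. But the proposal stops exactly where the proof has to begin. The entire content of the theorem is the step you yourself flag as ``the crux'' and then only describe programmatically (``should force $K$ into a standard position,'' ``it requires understanding which conjugacy classes \dots can be carried,'' ``a rigidity statement for the Goeritz group''). None of this is carried out, and it is not clear it could be carried out as stated: the generator $\delta$ is a handle slide of infinite order that induces a genuinely nontrivial (outer) automorphism of $\pi_1$ of a handlebody, so there is no reduction ``to the finitely many handlebody symmetries'' available without first controlling which Goeritz elements can carry $K$ to a homologous curve.

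The paper closes this gap by a different and much shorter route. It first proves (Theorem~\ref{thm:ReducingSphereTwists}) that for homologous curves of non-zero surface slope, Goeritz equivalence is realized by a composition of Dehn twists along reducing sphere curves: the kernel of the action of $\GG$ on $H_1(F)$ is the normal closure $\left\llangle \beta^2 \right\rrangle$ of the belt-curve twist, which by transitivity of $\GG$ on reducing spheres is exactly the group of reducing sphere twists, and the non-zero slope hypothesis (via Lemma~\ref{lem:NonOrtho}) is what pins down the homological action so that the equivalence can be pushed into this kernel. Given that, the theorem is immediate: a reducing sphere curve bounds a disk in each handlebody, so the twist extends to a disk twist of $H$ (and of $H'$) and hence does not change the free homotopy class of $K$ pushed into either handlebody. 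If you want to salvage your approach, you need some replacement for Theorem~\ref{thm:ReducingSphereTwists} --- some a priori control on which $\varphi$ can arise --- and that is precisely the group-theoretic input ($\ker(p\circ *) = \left\llangle \beta^2\right\rrangle$ together with Lemma~\ref{lem:NonOrtho}) that your sketch never invokes.
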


In Sections \ref{subsection:ExtendedGoeritzGroup} and \ref{subsection:ActionHomology}, we provide some necessary background about the (extended) Goeritz group and how it acts on the homology vectors for $F$. In Section \ref{subsection:SplitProduct}, we define the the split product and prove Theorems \ref{thm:SplitProductSurfaceSlope} and \ref{thm:Homology}. In Section \ref{subsection:FundamentalGroups}, we prove Theorem \ref{thm:Homotopy}. In Section \ref{section:examples}, we demonstrate some examples of the obstruction methods in action.

\subsection{Acknowledgements}
The authors would like to thank Chris Lyons, Adam Glesser, and Dan Margalit for helpful conversations, and to give special thanks to Ken Baker, and the referee who helped improve this manuscript. The first author was partially supported by the Sam Houston State University Individual Scholarship Program. 

\section{The Genus 2 Extended Goeritz Group in $S^3$}
\label{section:GoeritzGroup}

\subsection{Definitions}
\label{subsection:ExtendedGoeritzGroup}

Akbas gives a presentation for the Goeritz group of the genus 2 Heegaard splittings in $S^3$ \cite{AkbPA3SPG2HS}. In \cite{SchaA3SPG2HS}, Scharlemann also provides a presentation for the Goeritz group of the genus 2 Heegaard splitting in $S^3$, showing that the group is generated by four elements, called $\alpha$, $\beta$, $\gamma$ and $\delta$. The elements $\alpha$ and $\gamma$ are rotations by $\pi$ about the horizontal and vertical axes of symmetry of $F$, i.e., the axes through the blue stars and green circles, respectively, shown in Figure \ref{fig:goeritz}. The element $\beta$ is a rotation of half of $F$ by $\pi$ about the axis through the blue stars, with the right handle of the surface rotating clockwise, as viewed from the right. The map $\beta^2$, then can be considered as a left-handed Dehn twist about the curve $C$, which we refer to as the \emph{belt curve}, shown in Figure \ref{fig:goeritz}. As mentioned in \cite{SchaA3SPG2HS}, $\delta$ can represent several different maps. In this paper, we will consider $\delta$ to be the operation that slides the top foot of the righthand handle of the handlebody over the other handle in the counterclockwise direction, with reference to Figure \ref{fig:goeritz}. This differs from \cite{AkbPA3SPG2HS} where $\delta$ is considered to be the rotation by $2\pi/3$ about a vertical axis through the handlebody, as pictured in Figure \ref{fig:deltarot}. We will refer to these two different conceptions of $\delta$ as $\delta_{slide}$ and $\delta_{rot}$, respectively. The relationship between $\delta_{slide}$ and $\delta_{rot}$ is given by

\[ \delta_{rot} = \beta^{-1} \gamma \delta_{slide}.\]

\begin{center}
\begin{figure}
\begin{tikzpicture}
    \node[anchor=south west,inner sep=0] (image) at (0,0) {\includegraphics[width=\linewidth]{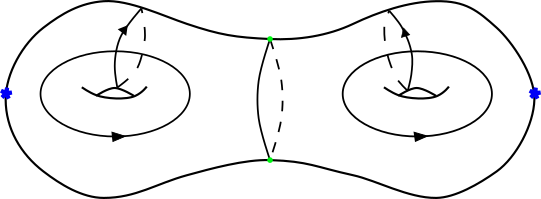}};
    \begin{scope}[
        x={(image.south east)},
        y={(image.north west)}
    ]
        \node [font=\bfseries] at (0.20,0.85) {$\textbf{a}$};
        \node [font=\bfseries] at (0.15,0.8) {$\textbf{b}$};
        \node [font=\bfseries] at (0.78,0.85) {$\textbf{x}$};
        \node [font=\bfseries] at (0.83,0.8) {$\textbf{y}$};
        \node [font=\bfseries] at (0.46,0.7) {$C$};
    \end{scope}
\end{tikzpicture}
\caption{The genus 2 surface with markings to indicate the Goeritz group elements.}\label{fig:goeritz}
\end{figure}
\end{center}

\begin{figure}
\begin{center}
\begin{tikzpicture}
    \node[anchor=south west,inner sep=0] (image) at (0,0) {\includegraphics[scale = .9]{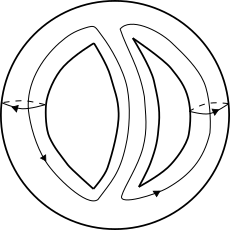}};
    \begin{scope}[
        x={(image.south east)},
        y={(image.north west)}
    ]
        \node [font=\bfseries] at (0.07,0.47) {$\textbf{a}$};
        \node [font=\bfseries] at (0.11,0.4) {$\textbf{b}$};
        \node [font=\bfseries] at (0.96,0.47) {$\textbf{x}$};
        \node [font=\bfseries] at (0.94,0.4) {$\textbf{y}$};
    \end{scope}
\end{tikzpicture}
\caption{The genus 2 surface imagined with a 3-fold rotational symmetry.}\label{fig:deltarot}
\end{center}
\end{figure}

\begin{figure}
\begin{center}
\begin{tikzpicture}
    \node[anchor=south west,inner sep=0] (image) at (0,0) {\includegraphics[scale = .9]{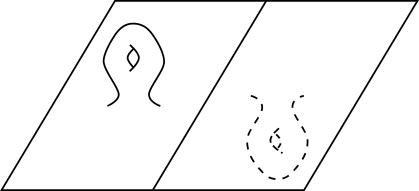}};
    \begin{scope}[
        x={(image.south east)},
        y={(image.north west)}
    ]
               \node [font=\bfseries] at (0.5,0.7) {$C$};
    \end{scope}
\end{tikzpicture}
\caption{The genus 2 surface with a point at infinity.}\label{fig:epsilon}
\end{center}
\end{figure}

Let $\GG$ denote the genus 2 Goeritz group for $S^3$. Then, adapting the amalgamated free product exhibited by Akbas (using $\delta_{rot}$) in \cite{AkbPA3SPG2HS}, and using $\delta = \delta_{slide}$ as a generator, we provide an explicit presentation:
\[ \GG = \left\langle \alpha, \beta, \gamma, \delta \, \, | \, \, \alpha^2, \, \, \gamma^2, \, \, \left[ \alpha, \beta\right],    
 \, \, \left[\alpha, \gamma\right], \, \, \left[\alpha, \delta\right], \, \, \alpha \left[ \gamma, \beta \right], \, \, (\beta^{-1} \gamma \delta)^3, \, \, (\beta^{-1} \delta)^2 \right\rangle,\]
 where $[a, b]$ is the commutator of $a$ and $b$, $aba^{-1}b^{-1}$.
 
In addition to these elements, the authors of this paper, along with E. Amoranto \cite{AmoDolRatACPTTK}, introduce an element $\varepsilon$, to extend the Goeritz group to another group that fixes the set $\{F, H, H'\}$, but does not fix $H$ and $H'$ individually, as the elements of the Goeritz group do. This element can be seen as a rotation about the curve $C$, shown in Figure \ref{fig:epsilon}. We call this group the \textit{extended Goeritz group}.

The extended Goeritz group, denoted $\extG$, is the extension of $\left\langle \varepsilon \, \, | \, \, \varepsilon^2 \right\rangle$ by $\GG$,
\[ \extG = \GG \rtimes \left\langle \varepsilon \, \, | \, \, \varepsilon^2 \right\rangle.\]

One can verify the relations,
\[ \varepsilon \alpha \varepsilon = \alpha, \qquad \varepsilon \beta \varepsilon = \alpha \beta, \qquad \varepsilon \gamma \varepsilon = \gamma, \qquad \varepsilon \delta \varepsilon = \delta^{-1}, \]

so that a presentation for $\extG$ is


\begin{eqnarray*} \extG = \left\langle \alpha, \beta, \gamma, \delta, \varepsilon \, \, | \, \, \alpha^2, \, \, \gamma^2, \, \, \varepsilon^2, \, \,  \left[ \alpha, \beta\right], \, \, \left[\alpha, \gamma\right], \, \, \left[\alpha, \delta\right], \, \, \alpha \left[ \gamma, \beta \right], \right. \qquad \qquad \qquad \\  \left. \left[ \varepsilon, \alpha \right], \, \, \alpha \left[ \varepsilon, \beta\right], \, \, \left[ \varepsilon, \gamma\right], \, \,  \left(\varepsilon \delta \right)^2, \, \, (\beta^{-1} \gamma \delta)^3, \, \, (\beta^{-1} \delta)^2 \right\rangle.  \end{eqnarray*}

\subsection{Action on homology}
\label{subsection:ActionHomology}
Each element of the (extended) Goeritz group induces an action on $H_1(F; \mathbb{Z}) \cong \mathbb{Z}^4$. Thus, we have homomorphisms
\[ * : \GG \to GL(4, \mathbb{Z}),\]
and 
\[ * : \extG \to GL(4, \mathbb{Z}).\]

We denote by $\alpha_*$, $\beta_*$, $\gamma_*$, $\delta_*$ and $\varepsilon_*$ the induced maps on $H_1(F)$ by $\alpha$, $\beta$, $\gamma$, $\delta$ and $\varepsilon$, respectively. The curves in Figure \ref{fig:goeritz} form a basis for $H_1(F)$. With a slight abuse of notation, we will refer both to these specific curves and their homology vectors with the same notation, and call $\set{\textbf{a}, \textbf{x}, \textbf{b}, \textbf{y} }$ the \emph{standard basis} for $H_1(F)$. The induced maps on $H_1(F)$ act on these generators of $H_1(F)$ in the following ways:

\[
\begin{array}{lllll}
\alpha_* : & \textbf{a} \mapsto -\textbf{a},  & \textbf{x} \mapsto -\textbf{x}, & \textbf{b} \mapsto -\textbf{b}, & \textbf{y} \mapsto -\textbf{y} \\
\beta_*: & \textbf{a} \mapsto \textbf{a}, & \textbf{x} \mapsto -\textbf{x}, & \textbf{b} \mapsto \textbf{b}, & \textbf{y} \mapsto -\textbf{y} \\
\gamma_*: &  \textbf{a} \mapsto -\textbf{x}, & \textbf{x} \mapsto -\textbf{a}, & \textbf{b} \mapsto -\textbf{y}, & \textbf{y} \mapsto -\textbf{b} \\
\delta_*: & \textbf{a} \mapsto \textbf{a}+\textbf{x}, & \textbf{x} \mapsto \textbf{x}, & \textbf{b} \mapsto \textbf{b}, & \textbf{y} \mapsto \textbf{y}-\textbf{b} \\
\varepsilon_*: &  \textbf{a} \mapsto \textbf{y}, & \textbf{x} \mapsto \textbf{b}, & \textbf{b} \mapsto \textbf{x}, & \textbf{y} \mapsto \textbf{a},
\end{array}
\]
so that the images of the generators of $\widehat{\GG}$ in $GL(4, \mathbb{Z})$ are:

\begin{gather*}
\alpha_* = \begin{pmatrix} -1 & 0 & 0 & 0 \\ 0 & -1 & 0 & 0 \\ 0 & 0 & -1 & 0 \\ 0 & 0 & 0 & -1\end{pmatrix}, \qquad \beta_*  = \begin{pmatrix} 1 & 0 & 0 & 0 \\ 0 & -1 & 0 & 0 \\ 0 & 0 & 1 & 0 \\ 0 & 0 & 0 & -1\end{pmatrix}, \qquad \\ \gamma_*  = \begin{pmatrix} 0 & -1 & 0 & 0 \\ -1 & 0 & 0 & 0 \\ 0 & 0 & 0 & -1 \\ 0 & 0 & -1 & 0\end{pmatrix}, \ 
\delta_*   =  \begin{pmatrix} 1 & 0 & 0 & 0 \\ 1 & 1 & 0 & 0 \\ 0 & 0 & 1 & -1 \\ 0 & 0 & 0 & 1\end{pmatrix}, \mbox{ and } \varepsilon_* = \begin{pmatrix} 0 & 0 & 0 & 1 \\ 0 & 0 & 1 & 0 \\ 0 & 1 & 0 & 0 \\ 1 & 0 & 0 & 0\end{pmatrix}.
\end{gather*}

Thus, considering an element of $H_1(F)$ as a vector of the form $(a,x,b,y)^T$, the induced maps have the following effect:

\[
\begin{array}{lcl}
\alpha_* ((a, x, b, y)^T) &=& (-a, -x, -b, -y)^T \\
\beta_* ((a, x, b, y)^T) &=& (a, -x, b, -y)^T \\
\gamma_* ((a, x, b, y)^T) &=& (-x, -a, -y, -b)^T \\
\delta_* ((a, x, b, y)^T) & = & (a, a+x, b-y, y)^T \\
\varepsilon_* ((a, x, b, y)^T) &=& (y, b, x, a)^T. \\
\end{array}
\]

Observe, then, that the image of $*: \GG \to GL(4, \mathbb{Z})$ is contained in the subgroup
\[ \mathcal{D} = \set{ \begin{pmatrix} C & 0 \\ 0 & (C^{-1})^T \end{pmatrix} \, \, | \, \, C \in GL(2, \mathbb{Z}) } \cong GL(2, \mathbb{Z}), \] 
where the isomorphism is the projection, $p$, to the first block $2 \times 2$ matrix. We will refer to such block diagonal matrices with the two blocks being inverse transposes of each other as being in \emph{Goeritz-form}.

One may then easily identify $\alpha_*$, $\beta_*$, $\gamma_*$, and $\delta_*$ with elements of $GL(2, \mathbb{Z})$. In fact, a presentation for $GL(2, \mathbb{Z})$ is given (see \cite{CoxMosGRDG}) by
\[  \left\langle R_1, R_2, R_3 \, \, | \, \, (R_1)^2, \, \, (R_2)^2, \, \, (R_3)^2, \, \, (R_1R_2)^3(R_1R_3)^{-2}, \, \, (R_1R_2)^6, \, \, (R_1R_3)^4 \right\rangle,\] 
where $R_1 = \begin{pmatrix} 0 & 1 \\ 1 & 0 \end{pmatrix}$, $R_2 = \begin{pmatrix} -1 & 0 \\ 1 & 1 \end{pmatrix}$, and $R_3 = \begin{pmatrix} -1 & 0 \\ 0 & 1 \end{pmatrix}$, and it can be shown directly that $p \circ *$ is surjective, since $R_1 = p(\alpha_* \gamma_*)$, $R_2 = p(\alpha_* \beta_* \delta_*)$, and $R_3 = p(\alpha_* \beta_*)$. By adjoining the relation $\beta^2 = 1$ into the presentation for $\GG$, $p \circ *$ becomes an isomorphism, so we can see that 
\[ \GG \Big/  \left\llangle \beta^2 \right\rrangle \cong GL(2, \mathbb{Z}),\]
and the kernel of $p \circ *$ is $\left\llangle \beta^2 \right\rrangle$.

Now, $\varepsilon_* \notin \mathcal{D}$, but from the directly verifiable relations (induced by the relations in $\extG$ noted above)
\[ \es^2 = I, \quad \es \as = \as \es, \quad \es \bs = \as \bs \es, \quad  \es \gs = \gs \es, \quad \es \ds  = \ds^{-1} \es,\]
it can be shown that the image of $* : \extG \to GL(4, \mathbb{Z})$ is the extension of $\left\langle \es \, \, | \, \, \es^2 \right\rangle$ by $\mathcal{D}$,
\[ \mathcal{D} \rtimes \left\langle \es \, \, | \, \, \es^2 \right\rangle, \]
and that the kernel of $* : \extG \to \mathcal{D} \rtimes \left\langle \es \, \, | \, \, \es^2 \right\rangle$ is, again, $\left\llangle \beta^2 \right\rrangle$.

Thus,
\[ \extG \Big/ \left\llangle \beta^2 \right\rrangle \cong \set{ \begin{pmatrix} C & 0 \\ 0 & (C^{-1})^T \end{pmatrix} \, \, | \, \, C \in GL(2, \mathbb{Z}) } \rtimes \left\langle \es \, \, | \, \, \es^2 \right\rangle.\]

From this, we can conclude the following.
\begin{prop} \label{prop:Extended} Two curves $K$ and $K'$ are (homologically) related by an extended Goeritz group element if and only if they are (homologically) Goeritz equivalent, or if $K$ and $\varepsilon K'$ are (homologically) Goeritz equivalent.
\end{prop}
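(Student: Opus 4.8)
The plan is to read the statement off directly from the group theory set up in Section~\ref{subsection:ExtendedGoeritzGroup}: namely that $\GG$ is a normal subgroup of $\extG$ of index two with $\extG = \GG \rtimes \langle \varepsilon \mid \varepsilon^2 \rangle$, so that $\extG / \GG \cong \mathbb{Z}/2\mathbb{Z}$ is generated by the image of $\varepsilon$; correspondingly, the image of $*$ on $\extG$ is $\mathcal{D} \rtimes \langle \es \mid \es^2 \rangle$, with $\mathcal{D}$ the image of $\GG$. I would first note that ``related by an extended Goeritz element'' and ``Goeritz equivalent'' are each genuine equivalence relations, each relation witnessed by a single group element, since $\extG$ and $\GG$ are groups.

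For the reverse implication, suppose first that $K$ and $K'$ are Goeritz equivalent, say $h(K) = K'$ with $h \in \GG \subseteq \extG$; then there is nothing to prove. Suppose instead that $K$ and $\varepsilon K'$ are Goeritz equivalent, say $h(K) = \varepsilon(K')$ with $h \in \GG$; applying $\varepsilon$ to both sides and using $\varepsilon^2 = 1$ gives $(\varepsilon h)(K) = K'$ with $\varepsilon h \in \extG$, so $K$ and $K'$ are related by an extended Goeritz element.

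For the forward implication, suppose $g \in \extG$ satisfies $g(K) = K'$. Since $\extG/\GG \cong \mathbb{Z}/2\mathbb{Z}$ is generated by the class of $\varepsilon$, either $g \in \GG$ or $\varepsilon g \in \GG$. In the first case $K$ and $K'$ are Goeritz equivalent by definition. In the second case, set $h = \varepsilon g \in \GG$, so $g = \varepsilon h$ (using $\varepsilon^2 = 1$); then $\varepsilon(h(K)) = K'$, hence $h(K) = \varepsilon^{-1}(K') = \varepsilon(K')$, and $K$ and $\varepsilon K'$ are Goeritz equivalent. The homological statement is proved by the identical argument, replacing each curve by its class in $H_1(F)$ and each Goeritz map by its induced automorphism, and using that $*$ is a homomorphism carrying $\GG$ into $\mathcal{D}$ and $\varepsilon$ to the involution $\es$.

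There is essentially no serious obstacle here beyond careful bookkeeping of cosets; the one point to watch is that $\varepsilon$ does not commute with $\GG$, so the coset of $g$ should be normalized by multiplying by $\varepsilon$ on a fixed side (as in $g = \varepsilon h$ above) rather than by attempting to pull $\varepsilon$ through an element of $\GG$. All of the structural facts invoked are exactly those established in the discussion preceding the proposition.
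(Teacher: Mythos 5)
Your proof is correct and follows essentially the same route as the paper, which offers no explicit argument but simply concludes the proposition from the index-two coset decomposition $\extG = \GG \sqcup \varepsilon\GG$ established in the preceding discussion of the semidirect product structure. Your write-up merely makes that coset bookkeeping explicit, including the correct care about which side $\varepsilon$ is multiplied on.
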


As the actions of the (non-extended) Goeritz group on homology act separately (though not independently) on the first two and the last two factors of the homology vector, the following lemmas will be of great use.

\begin{lem} \label{lem:GL2ZAction} Let $\textbf{\emph{v}} = (a, x)^T$, and $\textbf{\emph{v}}' = (a', x')^T$ be non-zero vectors in $\mathbb{Z}^2$. There exists a matrix $G \in GL(2, \mathbb{Z})$ so that $G\textbf{\emph{v}} = \textbf{\emph{v}}'$ if and only if $\gcd(a, x) = \gcd(a', x')$.
\end{lem}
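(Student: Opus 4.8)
The plan is to prove both directions by elementary number theory about $\gcd$ and the structure of $GL(2,\mathbb{Z})$. For the forward direction, suppose $G\mathbf{v} = \mathbf{v}'$ for some $G \in GL(2,\mathbb{Z})$. Since the entries of $G$ are integers, every entry of $\mathbf{v}'$ is an integer combination of the entries of $\mathbf{v}$, so $\gcd(a,x)$ divides both $a'$ and $x'$, hence $\gcd(a,x) \mid \gcd(a',x')$. Applying the same argument to $G^{-1}$, which is also in $GL(2,\mathbb{Z})$ and satisfies $G^{-1}\mathbf{v}' = \mathbf{v}$, gives $\gcd(a',x') \mid \gcd(a,x)$. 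Therefore the two gcd's are equal.

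For the converse, the key step is the claim that any nonzero $\mathbf{v} = (a,x)^T \in \mathbb{Z}^2$ with $d = \gcd(a,x)$ can be carried by some $G_\mathbf{v} \in GL(2,\mathbb{Z})$ to the ``standard'' vector $(d, 0)^T$. This is essentially the statement that $(a/d, x/d)$, being a coprime pair, extends to a basis of $\mathbb{Z}^2$: choose integers $s,t$ with $s(a/d) + t(x/d) = 1$ by Bézout, and set
\[ G_\mathbf{v} = \begin{pmatrix} a/d & x/d \\ -t & s \end{pmatrix}, \]
which has determinant $s(a/d) + t(x/d) = 1$, so $G_\mathbf{v} \in GL(2,\mathbb{Z})$, and one checks $G_\mathbf{v} \mathbf{v} = (d,0)^T$. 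Given the hypothesis $\gcd(a,x) = \gcd(a',x') = d$, we likewise obtain $G_{\mathbf{v}'} \in GL(2,\mathbb{Z})$ with $G_{\mathbf{v}'}\mathbf{v}' = (d,0)^T$. Then $G = G_{\mathbf{v}'}^{-1} G_\mathbf{v} \in GL(2,\mathbb{Z})$ satisfies $G\mathbf{v} = \mathbf{v}'$, completing the proof.

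I do not anticipate a serious obstacle here; the only point requiring a little care is the verification that the Bézout coefficients give a genuinely invertible integer matrix sending $\mathbf{v}$ to $(d,0)^T$, and that the degenerate-looking cases (one of $a,x$ zero, or negative entries) are handled automatically since Bézout and the determinant computation make no sign or nonvanishing assumptions beyond $\mathbf{v} \neq \mathbf{0}$. One should simply note at the outset that $\mathbf{v} \neq \mathbf{0}$ guarantees $d \geq 1$ so that division by $d$ is legitimate.
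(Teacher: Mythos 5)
Your overall strategy is sound and in fact mirrors the paper's: the forward direction via divisibility and $G^{-1}$ is correct (and slightly cleaner than the paper's, which routes through a Bezout representation of $\gcd(a',x')$), and the converse by reducing both vectors to the normal form $(d,0)^T$ and composing is exactly the paper's argument. However, the one step you flagged as ``requiring a little care'' is the step that fails. Your matrix
\[ G_{\mathbf{v}} = \begin{pmatrix} a/d & x/d \\ -t & s \end{pmatrix} \]
does have determinant $s(a/d)+t(x/d)=1$, but it does \emph{not} send $\mathbf{v}$ to $(d,0)^T$: its first row dotted with $(a,x)^T$ gives $(a^2+x^2)/d$, not $d$. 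Concretely, for $a=2$, $x=3$, $d=1$, $s=-1$, $t=1$, you get $G_{\mathbf{v}}\mathbf{v} = (13,-5)^T$. What you have written down is (up to transpose) the matrix whose \emph{columns} are $\mathbf{v}/d$ and a Bezout complement, i.e.\ the matrix carrying $(d,0)^T$ to $\mathbf{v}$ --- which is the object the paper actually constructs --- mislabeled as the matrix carrying $\mathbf{v}$ to $(d,0)^T$.

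The fix is small. Either take
\[ G_{\mathbf{v}} = \begin{pmatrix} s & t \\ -x/d & a/d \end{pmatrix}, \]
with $sa+tx=d$, which has determinant $(sa+tx)/d = 1$ and satisfies $G_{\mathbf{v}}\mathbf{v} = (sa+tx,\, -xa/d + ax/d)^T = (d,0)^T$; or follow the paper and define $M_{\mathbf{v}} = \bigl(\begin{smallmatrix} a/d & -t \\ x/d & s \end{smallmatrix}\bigr)$ sending $(d,0)^T$ to $\mathbf{v}$, and then set $G = M_{\mathbf{v}'} M_{\mathbf{v}}^{-1}$. With either correction the rest of your composition argument goes through verbatim.
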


\begin{proof}
First, suppose that $G = \begin{pmatrix} s & t \\ u & v \end{pmatrix} \in GL(2, \mathbb{Z})$ and $G\textbf{v} = \textbf{v}'$. Call $d = \gcd(a, x)$ and $d' = \gcd(a', x')$. By Bezout's Identity, there exist $e_1, e_2, e_1', e_2' \in \mathbb{Z}$ so that $e_1 a + e_2 x = d$, and $e_1' a' + e_2' x' = d'$. Now, $G\textbf{v} = \textbf{v}'$ implies that $a' = sa + tx$ and $x' = ua + vx$. Then,
\begin{eqnarray*} d' & = & e_1' a' + e_2' x' \\
& = & e_1' (sa + tx) + e_2' (ua + vx) \\
& = & (e_1's + e_2' u)a + (e_1' t + e_2' v)x. \end{eqnarray*}

Thus, by a corollary to Bezout's Identity, $d$ divides $d'$. Since $G$ is invertible, this argument can be reversed to show that $d'$ divides $d$, and hence $d = d'$.

Conversely, suppose that $\gcd(a, x) = \gcd(a', x') = d$. Then there exist $k, l, k', l' \in \mathbb{N}$ so that $a = kd$, $x = ld$, $a' = k'd$, and $x' = l'd$. Now, observe that $\gcd(k, l) = \gcd(k', l') = 1$, and there exist $e_1, e_2, e_1', e_2' \in \mathbb{Z}$ so that $e_1 k + e_2 l = 1 = e_1' k' + e_2' l' $.

Let $M = \begin{pmatrix} k & -e_2 \\ l & e_1 \end{pmatrix}$, which is evidently in $GL(2, \mathbb{Z})$. Then, note that 
\begin{eqnarray*} M \begin{pmatrix} d \\ 0 \end{pmatrix} & = & \begin{pmatrix} k & -e_2 \\ l & e_1 \end{pmatrix} \begin{pmatrix} d \\ 0 \end{pmatrix} \\
& = & \begin{pmatrix} kd  \\ ld \end{pmatrix} \\
& = & \begin{pmatrix} a \\ x \end{pmatrix}. \end{eqnarray*}

Similarly, $M' =  \begin{pmatrix} k' & -e_2' \\ l' & e_1' \end{pmatrix} \in GL(2, \mathbb{Z})$ and $M' \begin{pmatrix} d \\ 0 \end{pmatrix} = \textbf{v}'$, so $G = M' M^{-1}$ is the desired matrix.
\end{proof}

\begin{rem} \label{rem:Factorization} For each Goeritz matrix $M$ matrix carrying $[K]$ to $[K']$, the algorithm that produces the Smith Normal form of $M$ (see, e.g., Appendix C of \cite{ElmLAA}) provides a factorization into elementary matrices, which can then be factored directly into our generators $\as$, $\bs$, $\ds$, $\gs$, providing an element $g \in \GG$ inducing the matrix $M$.
\end{rem}

\begin{cor}
For any knot $K$ in $F$ with $[K] = (a, x, b, y)^T$, $K$ is Goeritz equivalent to a knot $K'$ with $[K'] = (d, 0, b', y')^T$ for some $b', y' \in \mathbb{Z}$, and $d = gcd(a, x)$.  
\end{cor}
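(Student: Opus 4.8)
The plan is to reduce the statement to Lemma~\ref{lem:GL2ZAction} together with the surjectivity of $p\circ *\colon \GG \to GL(2,\mathbb{Z})$ established above. If $(a,x)=(0,0)$ then $d=0$ and $K$ itself already has homology class of the form $(d,0,b',y')^T$, so we may take $g$ to be the identity; hence assume $(a,x)\neq(0,0)$. Since $\gcd(d,0)=d=\gcd(a,x)$, Lemma~\ref{lem:GL2ZAction} produces a matrix $G\in GL(2,\mathbb{Z})$ with $G(a,x)^T=(d,0)^T$.

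Next I would lift $G$ to the Goeritz group. The block matrix $\begin{pmatrix} G & 0 \\ 0 & (G^{-1})^T \end{pmatrix}$ lies in $\mathcal{D}$, and since $p\circ *$ is surjective onto $GL(2,\mathbb{Z})$, there is an element $g\in\GG$ with $g_* = \begin{pmatrix} G & 0 \\ 0 & (G^{-1})^T \end{pmatrix}$; indeed, Remark~\ref{rem:Factorization} produces such a $g$ explicitly as a word in $\as,\bs,\gs,\ds$. Now set $K'=g(K)$. Since $g$ is an orientation-preserving automorphism of $S^3$ preserving the splitting, $K'$ is again a simple closed curve in $F$, and $K$ and $K'$ are Goeritz equivalent by definition. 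Its homology class is $[K']=g_*[K]$: the first block of $g_*$ carries $(a,x)^T$ to $(d,0)^T$ and the second block carries $(b,y)^T$ to some $(b',y')^T := (G^{-1})^T(b,y)^T$, so $[K']=(d,0,b',y')^T$ as required.

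There is essentially no hard step here: the arithmetic content is entirely contained in Lemma~\ref{lem:GL2ZAction}, and the only thing one must invoke is that the relevant $GL(2,\mathbb{Z})$ matrix is actually induced by a Goeritz group element, which is precisely the surjectivity of $p\circ *$ recorded in the previous subsection. The single point needing a moment's care is the degenerate case $(a,x)=(0,0)$ (and, more generally, remembering that one need not prescribe $b'$ and $y'$ — they are simply whatever the inverse-transpose block outputs), which is disposed of at the outset.
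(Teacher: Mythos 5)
Your proposal is correct and follows essentially the same route as the paper: apply Lemma~\ref{lem:GL2ZAction} to obtain a matrix in $GL(2,\mathbb{Z})$ carrying $(a,x)^T$ to $(d,0)^T$, form the Goeritz-form block matrix, and realize it by an actual Goeritz group element via Remark~\ref{rem:Factorization}. Your explicit treatment of the degenerate case $(a,x)=(0,0)$ is a small tidy addition the paper omits, but otherwise the arguments coincide.
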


\begin{proof}
Let $d = gcd(a, x)$. Since the Goeritz group acts on homology by $C(a, x)^T$, and $((C^{-1})^T) (b, y)^T$,  then by Lemma \ref{lem:GL2ZAction}, we can find the matrix $C$ that carries $(a, x)^T$ to $(d, 0)^T$, and then the corresponding $((C^{-1})^T)$ carries $(b, y)^T$ to some vector, call it $(b', y')^T$. Then, factorize the $4\times 4$ matrix $\begin{pmatrix} C & 0 \\ 0 & \left(C^{-1} \right)^T \end{pmatrix}$, as in Remark \ref{rem:Factorization}, into the corresponding Goeritz generators, and the actual curve K gets carried by this product to some new curve $K'$, so that $[K'] = (d, 0, b', y')^T$.
\end{proof}

\begin{lem}\label{lem:NonOrtho} If $\textbf{\emph{v}}_1 = (a, x)^T$ and $\textbf{\emph{v}}_2 = (b, y)^T$ are non-orthogonal vectors in $\mathbb{Z}^2$, and $\textbf{\emph{v}}'_1 = (a', x')^T$ and $\textbf{\emph{v}}_2' = (b', y')^T$ are any vectors in $\mathbb{Z}^2$, there are at most 
 two $4\times4$ Goeritz matrices $M$ over $\mathbb{Z}$ so that $M \cdot (a, x, b, y)^T = (a', x', b', y')^T$.
\end{lem}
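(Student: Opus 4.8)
The plan is to exploit the block structure of Goeritz-form matrices. A Goeritz matrix $M$ carrying $(a,x,b,y)^T$ to $(a',x',b',y')^T$ is determined by a single $C \in GL(2,\mathbb{Z})$ with $C\textbf{v}_1 = \textbf{v}_1'$ and $(C^{-1})^T \textbf{v}_2 = \textbf{v}_2'$; equivalently $C^T \textbf{v}_2' = \textbf{v}_2$. So the task reduces to bounding the number of $C \in GL(2,\mathbb{Z})$ satisfying this pair of linear conditions. First I would record that, since $\textbf{v}_1$ and $\textbf{v}_2$ are non-orthogonal, $\textbf{v}_1 \cdot \textbf{v}_2 = ab + xy \neq 0$; this is exactly the nonvanishing hypothesis we need, and it is the surface slope by Theorem \ref{thm:SplitProductSurfaceSlope}.

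The key step is to observe that the two conditions $C\textbf{v}_1 = \textbf{v}_1'$ and $C^T\textbf{v}_2' = \textbf{v}_2$ pin down the action of $C$ on a spanning set, up to a controlled ambiguity. Indeed, from $C\textbf{v}_1 = \textbf{v}_1'$ I know $C$ on the line $\mathbb{R}\textbf{v}_1$. To get $C$ on a complementary direction, I would use the second condition rewritten as: for every $\textbf{w}$, $\textbf{v}_2 \cdot \textbf{w} = \textbf{v}_2' \cdot (C\textbf{w})$. Plugging in $\textbf{w} = \textbf{v}_1$ gives the compatibility relation $\textbf{v}_1\cdot\textbf{v}_2 = \textbf{v}_1'\cdot\textbf{v}_2'$ (consistent with condition \eqref{cond:slopes} of Theorem \ref{thm:Homology}), and is automatically satisfied. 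Now pick any $\textbf{w}_0$ with $\textbf{v}_2 \cdot \textbf{w}_0 \neq 0$, so $\{\textbf{v}_1, \textbf{w}_0\}$ is a basis of $\mathbb{R}^2$ (using $\textbf{v}_1\cdot\textbf{v}_2\neq 0$, so $\textbf{v}_1$ itself is not orthogonal to $\textbf{v}_2$, so we may even take $\textbf{w}_0=\textbf{v}_1$ spanning together with any second vector transverse to it). Writing $C\textbf{w}_0 = s\textbf{v}_1' + t\textbf{w}_1$ for a fixed $\textbf{w}_1$ transverse to $\textbf{v}_1'$, the equation $\textbf{v}_2 \cdot \textbf{w}_0 = \textbf{v}_2' \cdot(C\textbf{w}_0)$ becomes one linear equation in $(s,t)$ with the $t$-coefficient $\textbf{v}_2'\cdot\textbf{w}_1 \neq 0$ (choosing $\textbf{w}_1$ non-orthogonal to $\textbf{v}_2'$; possible since $\textbf{v}_2'\neq 0$), so $t$ is determined by $s$. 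This leaves a one-parameter family over $\mathbb{Z}$, but then $\det C = \pm 1$ imposes a quadratic constraint on $s$, which has at most two integer solutions. That yields the bound of two.

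Concretely, I would organize the write-up as follows. Set $C = \bigl(\begin{smallmatrix} s & t \\ u & w\end{smallmatrix}\bigr)$. The equation $C(a,x)^T = (a',x')^T$ gives $sa+tx = a'$ and $ua+wx = x'$; the equation $C^T(b',y')^T = (b,y)^T$ gives $sb'+uy' = b$ and $tb'+wy' = y$. Solve this linear system for $(s,t,u,w)$: it is four equations in four unknowns, but rank-deficient by exactly one (the compatibility $ab+xy=a'b'+x'y'$, which holds since both equal $\textbf{v}_1\cdot\textbf{v}_2$ — note this requires $\textbf v_1 \cdot \textbf v_2$ to be an honest invariant, which is where $ab+xy \neq 0$ keeps things non-degenerate), so the solution set is a line $\{(s_0,t_0,u_0,w_0) + \lambda(\sigma,\tau,\mu,\nu) : \lambda \in \mathbb{Q}\}$; one checks the direction vector is nonzero precisely because $ab+xy \neq 0$. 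Intersecting this affine line with the hypersurface $sw - tu = \pm 1$ gives a quadratic (or lower-degree) equation in $\lambda$; a nonconstant quadratic has at most two roots, and even if it degenerates to linear we get at most one, while the constant case is ruled out by the direction vector being nonzero. Hence at most two matrices $C$, hence at most two Goeritz matrices $M$.

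The main obstacle I anticipate is the bookkeeping to show the solution line of the linear system is genuinely one-dimensional — i.e., that the four linear equations have rank exactly $3$ — and that its direction vector is nonzero, both of which should come down to the non-orthogonality hypothesis $ab + xy \neq 0$. A secondary subtlety is handling the degenerate cases where the quadratic $\det$-equation collapses: I would argue that if it collapsed to something identically true, the line would lie entirely in $\{sw-tu=\pm1\}$, forcing a contradiction with the direction vector being nonzero (a line cannot lie in a smooth conic), so the degenerate cases still give $\leq 2$. I expect the cleanest exposition is the coordinate-free version in the second paragraph, with the explicit $2\times 2$ computation relegated to a verification that the relevant coefficient (playing the role of $\textbf{v}_2'\cdot\textbf{w}_1$) is exactly $\pm(ab+xy)$ up to the invariants, making the dependence on the hypothesis transparent.
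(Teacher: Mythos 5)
Your reduction to counting $C \in GL(2,\mathbb{Z})$ with $C\textbf{v}_1 = \textbf{v}_1'$ and $C^T\textbf{v}_2' = \textbf{v}_2$, and the observation that these linear conditions cut out an affine line of rational solutions (rank $3$, the one relation being the compatibility $\textbf{v}_1\cdot\textbf{v}_2 = \textbf{v}_1'\cdot\textbf{v}_2'$), is correct and is a genuinely different route from the paper's: the paper instead fixes $d=\det C=\pm1$ in advance, writes $(C^{-1})^T = \frac{1}{d}\bigl(\begin{smallmatrix}v & -u\\ -t & s\end{smallmatrix}\bigr)$, and solves a \emph{nonsingular} $4\times4$ linear system for each choice of $d$, getting exactly one rational candidate per sign. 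However, your final counting step has a genuine gap. The condition ``$\det C = \pm1$'' is two separate equations, $\det C = 1$ and $\det C = -1$; if each restricted to the line were an honest quadratic in $\lambda$ you would get a bound of four, not two. Moreover, your fallback for the degenerate case --- ``a line cannot lie in a smooth conic'' --- is false in this setting: $\{sw - tu = 1\}$ is a quadric hypersurface in $\mathbb{R}^4$ (it is $SL(2,\mathbb{R})$) and contains many affine lines, e.g.\ $\lambda \mapsto \bigl(\begin{smallmatrix}1 & \lambda \\ 0 & 1\end{smallmatrix}\bigr)$. So as written, neither the ``at most two roots of a quadratic'' count nor the exclusion of the constant case is justified.

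The fix is available inside your own setup. The direction vector of the solution line is the kernel of $C \mapsto (C\textbf{v}_1,\, C^T\textbf{v}_2')$, which is spanned by the rank-one matrix $C_0 = \bigl(\begin{smallmatrix}xy' & -ay' \\ -xb' & ab'\end{smallmatrix}\bigr)$ (rows orthogonal to $\textbf{v}_1$, columns orthogonal to $\textbf{v}_2'$). Since $\det C_0 = 0$, the quadratic term vanishes identically and $\det(C_* + \lambda C_0) = \det C_* + \lambda\,\mathrm{tr}(\mathrm{adj}(C_0)\,C_*)$ is affine in $\lambda$, with slope $\mathrm{tr}(\textbf{v}_1\textbf{v}_2'^{\,T} C_*) = \textbf{v}_2'\cdot(C_*\textbf{v}_1) = \textbf{v}_1'\cdot\textbf{v}_2' = ab+xy \neq 0$. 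Hence the determinant is a non-constant affine function along the line and takes each of the values $1$ and $-1$ at most once, giving the bound of two. (You should also dispose of the edge cases $\textbf{v}_1' = 0$ or $\textbf{v}_2' = 0$, where the rank drops but the system is inconsistent because $\textbf{v}_1, \textbf{v}_2 \neq 0$.) With this repair your argument is a valid alternative to the paper's, and it recovers the paper's observation that the two candidates correspond precisely to the two possible values of the determinant.
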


\begin{proof} Suppose there is a matrix
$A = \begin{pmatrix} s & t \\ u & v \end{pmatrix}$, so that 
$M = \begin{pmatrix} A & 0 \\ 0 & \left(A^T \right)^{-1} \end{pmatrix}$ has the property that 
$M \cdot (a, x, b, y)^T = (a', x', b', y')^T.$

Since $\left(A^T \right)^{-1} = \frac{1}{\det(A)} \begin{pmatrix} v & -u \\ -t & s \end{pmatrix}$, we have 
\[ \frac{1}{\det(A)} \begin{pmatrix} v & -u \\ -t & s \end{pmatrix} \cdot \begin{pmatrix} b \\ y \end{pmatrix} = \begin{pmatrix} b' \\ y' \end{pmatrix}. \]

Together with the equations coming from $A \textbf{v}_1 = \textbf{v}'_1$, we have:

\begin{align*} \frac{(vb - uy) }{ d } = b' \\
\frac{( -tb + sy)}{d} = y' \\
sa + tx = a' \\
ua + vx = x',\end{align*}
where $d = \det(A)$. As we require that $A \in GL(2, \mathbb{Z})$, $d$ must be $\pm 1$.

This is equivalent to the matrix equation
$\begin{pmatrix} \frac{y}{d} & -\frac{b}{d} & 0 & 0 \\ a & x & 0 & 0 \\ 0 & 0 & \frac{-y}{d} & \frac{b}{d}  \\ 0 & 0 & a & x \end{pmatrix} \cdot \begin{pmatrix} s \\ t \\ u \\ v \end{pmatrix} = \begin{pmatrix} y' \\ a' \\ b' \\ x' \end{pmatrix}$.

As $(a, x)$ and $(b, y)$ are non-orthogonal, each of the $2 \times 2$ submatrices is invertible over $\mathbb{Q}$. So for each of $d = \pm 1$, there is a unique solution for $s, t, u,$ and $v$ over $\mathbb{Q}$.
In particular, for $d = 1$ and $d=-1$,
\[\begin{pmatrix} s & t \\ u & v \end{pmatrix} = \frac{1}{ab+xy} \begin{pmatrix} xy'd + a'b \ & -ay'd + a'y \\ -b'xd + bx' & ab'd + x'y \end{pmatrix}.\] 
Note that while these matrices will never be equal, either of them may be in $GL(2, \mathbb{Q})$ but not $GL(2, \mathbb{Z})$.
\end{proof}

In fact, the determinants of the matrices found in Lemma \ref{lem:NonOrtho} can be computed, showing that 
\[d = d \frac{a'b' + x'y'}{ab+xy}.\] This hints at the importance of the quantity $ab + xy$.

\subsection{Slope bases and the split product}
\label{subsection:SplitProduct}

\begin{defn}
Let $F$ be a genus $2$ Heegaard surface for $S^3$ bounding handlebodies $H$ and $H'$. Let $\textbf{k} \in \mathbb{Z}^4$ represent an element of $H_1(F, \mathbb{Z})$ with respect to the standard basis. Then the first $2$ components of $\textbf{k}$ determine an element $\textbf{k}_{H'} \in H_1(H') \cong \mathbb{Z}^2$ and the second $2$ components determine an element $\textbf{k}_H \in H_1(H) \cong \mathbb{Z}^2$. We may then compute the dot product of $\textbf{k}_{H'}$ and $\textbf{k}_H$, which we will call the \textit{split product} of $\textbf{k}$ (with respect to the standard basis), denoted $SP(\textbf{k})$. When $SP(\textbf{k}) = 0$, we say $\textbf{k}$ is \textit{split orthogonal with respect to $F$}. For a (non-separating) curve $K$ on $F$, we define $SP(K) = SP(\textbf{k})$, where $\textbf{k}$ is the homology representative of $K$ in $F$. 
\end{defn}

The following proposition is straightforward to show.

\begin{prop} \label{prop:SplitProductPreserved}
The split product is preserved by elements of the extended Goeritz group. That is, every curve in an extended Goeritz equivalence class has the same split product. 
\end{prop}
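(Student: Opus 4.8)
The plan is to show that the split product $SP(\mathbf{k}) = \mathbf{k}_{H'} \cdot \mathbf{k}_H$ is invariant under the action of each generator of $\extG$ on homology, and then conclude invariance under the whole group since the generators' induced matrices generate the image of $* : \extG \to GL(4,\mathbb{Z})$. Writing $\mathbf{k} = (a,x,b,y)^T$, so $\mathbf{k}_{H'} = (a,x)$ and $\mathbf{k}_H = (b,y)$, we have $SP(\mathbf{k}) = ab + xy$. First I would simply compute $SP$ on the image of $(a,x,b,y)^T$ under each of $\alpha_*, \beta_*, \gamma_*, \delta_*, \varepsilon_*$ using the explicit formulas already recorded in Section~\ref{subsection:ActionHomology}:
\begin{align*}
\alpha_*: &\quad (-a)(-b) + (-x)(-y) = ab + xy,\\
\beta_*: &\quad (a)(b) + (-x)(-y) = ab + xy,\\
\gamma_*: &\quad (-x)(-y) + (-a)(-b) = xy + ab,\\
\delta_*: &\quad (a)(b-y) + (a+x)(y) = ab - ay + ay + xy = ab + xy,\\
\varepsilon_*: &\quad (y)(x) + (b)(a) = ab + xy.
\end{align*}
Each returns $ab+xy = SP(\mathbf{k})$, so every generator preserves the split product.

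Next, since the induced maps $\alpha_*, \beta_*, \gamma_*, \delta_*, \varepsilon_*$ generate the image of the homomorphism $* : \extG \to GL(4,\mathbb{Z})$, and the property ``$M$ preserves $SP$'' is closed under composition and inverses (if $M$ preserves $SP$ then so does $M^{-1}$, because $SP$ takes the same value on $\mathbf{k}$ and $M\mathbf{k}$ for all $\mathbf{k}$, hence on $M^{-1}\mathbf{k}$ and $\mathbf{k}$), it follows that $g_*$ preserves $SP$ for every $g \in \extG$. Finally, for an actual curve $K$ on $F$ and $g \in \extG$, the curve $g(K)$ has homology class $g_*[K]$, so $SP(g(K)) = SP(g_*[K]) = SP([K]) = SP(K)$. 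Thus every curve in an extended Goeritz equivalence class has the same split product.

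There is essentially no obstacle here: the computation for $\delta_*$ is the only one with any cancellation, and it is immediate. The only conceptual point worth stating carefully is the passage from ``preserved by generators'' to ``preserved by the group,'' which is routine given that a quadratic form being preserved by a set of matrices is preserved by the subgroup they generate. One could alternatively phrase the whole argument as the observation that the symmetric bilinear form $B((a,x,b,y)^T) = ab + xy$, i.e., the form with Gram matrix $\left(\begin{smallmatrix} 0 & I \\ I & 0 \end{smallmatrix}\right)$ up to reordering coordinates, is preserved by $\mathcal{D}$ (since $\left(\begin{smallmatrix} C & 0 \\ 0 & (C^{-1})^T \end{smallmatrix}\right)$ manifestly preserves the pairing of the first block with the second) and by $\varepsilon_*$, but the direct generator-by-generator check is the shortest route.
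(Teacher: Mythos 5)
Your proof is correct and takes essentially the same route as the paper, which simply states that one need only check the generators; you carry out that check explicitly and correctly (including the one cancellation for $\delta_*$) and supply the routine passage from generators to the full group. Nothing further is needed.
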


\begin{proof}
One need only check that each of the generating elements of the extended Goeritz group induces a map that preserves the split product of $\textbf{k}$. 
\end{proof}

However, the result turns out to be true for a deeper reason. The surface slope of a curve is certainly preserved by any extended Goeritz group action, and when the split product is computed using the standard basis, it provides an immediate way to calculate the surface slope.

\begin{thm}\label{thm:sp=ss}
Suppose $K \subset F$ is a non-separating simple closed curve. The split product of $[K]  \in H_1(F)$ is equal to the surface slope of $K$ with respect to $F$.
\end{thm}

\begin{proof}
A single meridian, say, representing homology class $(1, 0, 0, 0)^T$ clearly satisfies the conditions of the theorem. The mapping class group of the genus two surface can be generated by Dehn twisting on the five curves shown in Figure \ref{figure:generatingset}, and the mapping class group is transitive on isotopy classes of non-separating curves, so it suffices to show that the split product and the surface slope remain equal to each other after a Dehn twist along each of the five curves.

\begin{figure}
\begin{center}
\begin{tikzpicture}
    \node[anchor=south west,inner sep=0] (image) at (0,0) {\includegraphics[scale = .9]{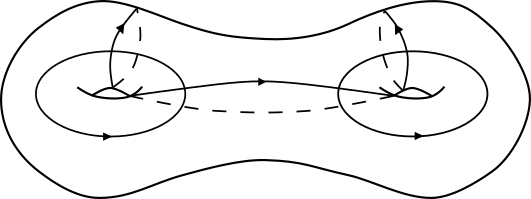}};
    \begin{scope}[
        x={(image.south east)},
        y={(image.north west)}
    ]
               \node [font=\bfseries] at (0.2,0.87) {$c_1$};
               \node [font=\bfseries] at (0.15,0.8) {$c_2$};
               \node [font=\bfseries] at (0.79,0.87) {$c_5$};
               \node [font=\bfseries] at (0.84,0.8) {$c_4$};
               \node [font=\bfseries] at (0.5,0.67) {$c_3$};
    \end{scope}
\end{tikzpicture}

\caption{A generating set for the mapping class group of $F$, together with a choice of orientation on each curve.}
\label{figure:generatingset}
\end{center}
\end{figure}

For the five curves, denoted $c_1$, $c_2$, $c_3$, $c_4$, and, $c_5$ in Figure \ref{figure:generatingset}, we will choose an orientation for each curve, as shown, so that the orientations of $c_1$, $c_2$, $c_4$, and $c_5$ agree with the choices of orientations on the standard generators for $H_1(F)$, namely $\textbf{a}$, $\textbf{b}$, $\textbf{y}$, and $\textbf{x}$, respectively. Further, orient $F$ so that the positive normal direction points into $H'$. Denote a push-off of a curve $c \subset F$ in the positive (resp., negative) direction as $c^+$ (resp., $c^-$).

\begin{rem}\label{rem:twistypoo} Performing a positive (left-handed) Dehn twist around $c_1$, $c_3$, or $c_5$ changes the surface slope of a curve $K$ by $-lk(K, c_j^+)^2$, while performing a positive Dehn twist around $c_2$ or $c_4$ changes the surface slope of $K$ by $lk(K, c_j^-)^2$.
\end{rem}

\begin{lem}\label{lem:intersectionnumber}
For a curve $K$ with $[K] = (a, x, b, y)^T$, $lk(K, c_3^+) =  b-y$.
\end{lem}

\begin{proof} 
Since $K = c_1$ (with either orientation) satisfies the statement of the lemma, and the mapping class group acts transitively on non-separating curves, it suffices to show that Dehn twisting a curve around a generator of the mapping class group will change each of the quantities $lk(K, c_3^+)$ and $b-y$ in the same way.

Suppose $K$ satisfies the statement. Dehn twisting around $c_1$, $c_3$, or $c_5$ has no effect on $b$, $y$, or the linking number. Dehn twisting around $c_2$ changes $b$ by  $\aint(K, c_2)$ and $y$ by 0, while $lk(K, c_3^+)$  changes by $\aint(K, c_2)$ as well. Similarly, Dehn twisting around $c_4$ changes $b$ by 0 and $y$ by $\aint(K, c_4)$, while also $lk(K, c_3^+)$ changes by $-\aint(K, c_4)$.
\end{proof}

Suppose that $[K] = (a, x, b, y)^T$, so the split product of $K$ is $a b + x y$ and the surface slope of $K$ with respect to $F$ is $ab + xy$. 
Dehn twisting along $c_1$ sends $[K]$ to $(a+\aint(K, c_1), x, b, y)^T$, so the split product of the resulting vector is $ab + xy + b\aint(K, c_1) = ab + xy - b^2$, since $\aint(K, c_1) = -b$. By Remark \ref{rem:twistypoo}, the surface slope changes by $-lk(K, c_1^+)^2$, which is equal to $-b^2$.
Dehn twisting along $c_2$ sends $[K]$ to $(a, x, b + \aint(K, c_2) , y)^T$, which has split product $ab + xy + a\aint(K, c_2) = ab + xy + a^2$, since $\aint(K, c_2) = a$. By Remark \ref{rem:twistypoo}, the surface slope changes by $lk(K, c_2^-)^2$, which is equal to $a^2$. 
Dehn twisting around $c_4$ and $c_5$ are the same, \emph{mutatis mutandis}.

Finally, Dehn twisting along $c_3$ sends $[K]$ to $(a + \aint(K, c_3), x - \aint(K, c_3), b, y)^T$, which has split product $ab + xy + (b-y)\aint(K, c_3) = ab + xy - (b-y)^2$, since $\aint(K, c_3) = y- b$. But $lk(K, c_3^+) = b-y$, by Lemma \ref{lem:intersectionnumber}, and by Remark \ref{rem:twistypoo}, the surface slope changes by $-lk(K, c_3^+)^2$.
\end{proof}

\begin{rem}
\label{rem:NotationConvention}
In previous work \cite{AmoDolRatACPTTK}, we used a different orientation convention for the generators $\textbf{b}$ and $\textbf{y}$, and showed that the twisted torus knot $K(p,q,r,n)$ had $[K]_F = (q, nr, -p, -r)^T$. With the current convention, $[K] = (q, nr, p, r)^T$, so $SP(K) = pq + nr^2$, which is exactly the surface slope of $K$ with respect to $F$. 
\end{rem}

Remark \ref{rem:NotationConvention} raises the question of just how much the split product depends on the choice of basis. To answer this, we will need to discuss symplectic bases and introduce what we will call a \emph{slope basis}.

A symplectic form, $\omega$, on a vector space is a non-degenerate, bilinear 2-form. A basis, $\set{\textbf{v}_1, \textbf{w}_2, \dots, \textbf{v}_n, \textbf{w}_n}$, for a (necessarily $2n$-dimensional) vector space with a symplectic form is called a \emph{symplectic basis} if $\omega(\textbf{v}_i, \textbf{v}_j) = \omega(\textbf{w}_i, \textbf{w}_j) = 0$ for all $i, j$, and $\omega(\textbf{v}_i, \textbf{w}_j) = \delta_{i j}$, the Kronecker delta. The \emph{standard symplectic form} on $\mathbb{R}^{2n}$ is
\[ \omega(\textbf{v}, \textbf{v}') = \sum\limits_{i=1}^n (v_i w_i' - v_i' w_i),\] for $\textbf{v} = (v_1, w_1, \dots, v_n, w_n)^T$, $\textbf{v}' = (v_1', w_1', \dots, v_n', w_n')^T$, and it is unique up to change of basis of $\mathbb{R}^{2n}$.

Now, the algebraic intersection number, 
\[ \aint : H_1(F; \mathbb{Z}) \wedge H_1(F; \mathbb{Z}) \to \mathbb{Z} \]
extends uniquely to a symplectic form on $H_1(F; \mathbb{R}) \cong \mathbb{R}^4$. A \emph{geometric symplectic basis} is a collection of oriented simple closed curves that form a symplectic basis. Such a geometric symplectic basis is formed by re-ordering the curves of our standard basis from Figure \ref{fig:goeritz}, 
\[ \set{ \textbf{a}, \textbf{b}, \textbf{x}, \textbf{y} }.\]

For more, see Chapter 6 of \cite{FarMarPMCG}.

There is an additional property that the standard basis has that relates the homology of the surface $F$ to the topologies of the handlebodies $H$ and $H'$, namely that $\textbf{a}$ and $\textbf{x}$ bound disks in $H$, and $\textbf{b}$ and $\textbf{y}$ bound disks in $H'$.

\begin{defn} We say a collection of simple closed curves, $\set{a', x', b', y'}$, on the genus 2 Heegaard surface of $S^3$ is a \emph{\goodbasis} if $a'$ and $x'$ bound disks in $H$, $b'$ and $y'$ bound disks in $H'$, and $\set{a', b', x', y'}$ is a geometric symplectic basis. 
\end{defn}

\SplitProductSurfaceSlope*

\begin{proof}
Let $\beta' = \set{a', x', b', y'}$ be any \goodbasis \ and let $\beta = \set{\textbf{a}, \textbf{x}, \textbf{b}, \textbf{y}}$ be the standard basis for $H_1(F)$. Since $a'$ bounds a disk in $H$, it is trivial in $H_1(H)$, while $\textbf{a}$ and $\textbf{x}$ are generators for $H_1(H')$, so $[a'] = s\textbf{a} + u\textbf{x}$, for some $s, u \in \mathbb{Z}$. Similarly, $[x'] = t\textbf{a} + v\textbf{x}$, $[b'] = e\textbf{b} + g\textbf{y}$, and $[y'] = f\textbf{b} + h\textbf{y}$ for some $t, v, e, f, g, h \in \mathbb{Z}$.

Thus, the change of basis matrix from $\beta$ to $\beta'$ is of the form $\begin{pmatrix} C & 0 \\ 0 & D \end{pmatrix}$,
with $C = \begin{pmatrix} s & t \\ u & v \end{pmatrix}$, and $D = \begin{pmatrix} e & f \\ g & h \end{pmatrix}$.

Further, as $\set{a', b', x', y'}$ is a geometric symplectic basis, using coordinates of a vector with respect to the standard symplectic basis, we have

\begin{small}
\[1 = \aint(a', b') = \omega([a'], [b']) = \omega((s, 0, u, 0)^T, (0, e, 0, g)^T) = se + ug = (s, u) \cdot (e, g)^T, \]
\[1 = \aint(x', y') = \omega([x'], [y']) = \omega((t, 0, v, 0)^T, (0, f, 0, h)^T) = tf + vh = (t, v) \cdot (f, h)^T,\]
\[0 = \aint(a', y') = \omega([a'], [y']) = \omega((s, 0, u, 0)^T, (0, f, 0, h)^T) = sf + uh = (s, u) \cdot (f, h)^T,\]
\[0 = \aint(b', x') = \omega([b'], [x']) = \omega((0, e, 0, g)^T, ((t, 0, v, 0)^T) = -et - gv = -(t, v) \cdot(e, g)^T.\] \end{small}

These conditions precisely ensure that $D = \left( C^T \right)^{-1}$, so that the change of basis matrix is a Goeritz matrix. Then, Proposition \ref{prop:SplitProductPreserved} guarantees that $SP([K]_{\beta'}) = SP([K]_\beta)$.
\end{proof}

We can now prove Theorem \ref{thm:Homology}.

\TheoremHomology*

\begin{proof} With respect to any slope basis, let $[K] = (a, x, b, y)^T$ and $[K'] = (a', x', b', y')^T$ be the homology vectors of the curves. By Theorem \ref{thm:SplitProductSurfaceSlope}, the surface slope of $K$ is equal to its split product, which establishes the necessity and sufficiency of condition (\ref{cond:slopes}). The necessity and sufficiency of conditions (\ref{cond:firstgcd}) and (\ref{cond:secondgcd}) follow from Lemma \ref{lem:GL2ZAction}. Finally, since the split product is non-zero, the two vectors $(a, x)^T$ and $(b, y)^T$ are non-orthogonal, and by Lemma \ref{lem:NonOrtho}, there are at most two matrices carrying $[K]$ to $[K']$. Conditions (\ref{cond:topleft} - \ref{cond:bottomright}) (resp., (\ref{cond:topleftprime} - \ref{cond:bottomrightprime})) correspond to the matrix over $\mathbb{Q}$ when $d = 1$ (resp., $d = -1$) actually residing in $GL(4, \mathbb{Z})$.
\end{proof}

Proposition \ref{prop:Extended} allows us to easily extend this obstruction to the extended Goeritz group by noting that $[\varepsilon K'] = (y', b', x', a')^T$.

\subsection{Action on fundamental groups}
\label{subsection:FundamentalGroups}
If an essential (separating) curve on the Heegaard surface $F$ bounds a disk in each handlebody, then the union of these disks is a reducing sphere for the Heegaard splitting. We will call such a curve a \emph{reducing sphere curve}, and a Dehn twist around such a curve a \emph{reducing sphere twist}.

\begin{thm}\label{thm:ReducingSphereTwists} Suppose $K$ and $K'$ are homologous simple closed curves with non-zero surface slopes. Then $K$ and $K'$ are Goeritz equivalent if and only if there is a sequence of reducing sphere twists, $T_1$, $T_2$, \dots, $T_n$ so that $K' = (T_n \circ \cdots \circ T_2 \circ T_1)(K)$. 
\end{thm}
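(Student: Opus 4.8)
The plan is to prove the biconditional one direction at a time; the reverse implication is elementary and essentially all the content is in the forward implication. For the reverse direction: a Dehn twist along a curve bounding a disk $D$ in a handlebody extends, by twisting in a collar of $D$, to a self-homeomorphism of that handlebody, so a reducing sphere twist — a Dehn twist along a curve bounding disks in both $H$ and $H'$ — extends over both handlebodies to a self-homeomorphism of $S^3 = H\cup_F H'$ preserving $F$ and each of $H,H'$; that is, it lies in $\GG$. Hence any composition of reducing sphere twists lies in $\GG$, so if such a composition carries $K$ to $K'$ they are Goeritz equivalent.

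For the forward direction I would first identify the subgroup of $\GG$ generated by all reducing sphere twists with $\left\llangle\beta^2\right\rrangle$: the belt curve $C$ is a reducing sphere curve and $\beta^2$ is a Dehn twist along it, and every $\GG$-conjugate of a reducing sphere twist is again one (elements of $\GG$ preserve $H$ and $H'$, hence carry reducing sphere curves to reducing sphere curves), so $\left\llangle\beta^2\right\rrangle$ sits inside that subgroup; conversely a Dehn twist along a separating curve acts trivially on $H_1(F)$, so every reducing sphere twist lies in $\ker(*\colon\GG\to GL(4,\mathbb Z)) = \ker(p\circ*) = \left\llangle\beta^2\right\rrangle$, the two kernels agreeing since $p$ is injective on the image of $*$. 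Consequently, given $g\in\GG$ with $g(K)=K'$, it suffices to find $\sigma\in\GG$ with $\sigma(K)=K$ and $\sigma_*=g_*$, for then $g\sigma^{-1}\in\left\llangle\beta^2\right\rrangle$ is a product of reducing sphere twists sending $K$ to $K'$. Since $[K]=[K']$ the matrix $g_*$ fixes $[K]$, and since the surface slope $ab+xy$ is non-zero the vectors $(a,x)^T$ and $(b,y)^T$ are non-orthogonal, so by Lemma~\ref{lem:NonOrtho} $g_*$ is either the identity $I$ or the $d=-1$ matrix $M_0$ of that lemma. If $g_*=I$, take $\sigma=\mathrm{id}$.

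This leaves the case $g_*=M_0\neq I$. Here $M_0$ is the Goeritz-form of a reflection $C_0\in GL(2,\mathbb Z)$ having a primitive multiple of $(a,x)^T$ as a $(+1)$-eigenvector, so $M_0^2=I$, and it is enough to build $\sigma\in\GG$ fixing $K$ with $\sigma_*=M_0$. Up to conjugacy in $GL(2,\mathbb Z)$ there are exactly two order-two reflections, namely $p(\beta_*)$ and $p(\alpha_*\gamma_*)$, so — realizing a conjugating matrix by a Goeritz element via Remark~\ref{rem:Factorization} — I would reduce to the case where $M_0$ equals $\beta_*$ (resp.\ $(\alpha\gamma)_*$) and $[K]$ has been transported to the form $(p,0,q,0)^T$ (resp.\ $(p,p,q,q)^T$), necessarily with $pq\neq 0$ since the surface slope is non-zero. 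It then remains to check that $\beta$ (resp.\ the handle-swapping involution $\alpha\gamma$, which has order two) fixes the transported curve $L$. For $\beta$ I would show $L$ is isotopic into the component $T_L$ of $F\setminus C$ carrying the classes $\textbf{a}$ and $\textbf{b}$, on which $\beta$ is the identity: after minimizing $|L\cap C|$, the arcs of $L\cap T_R$ are essential, yet every essential simple closed curve of $T_R$ is homologically non-trivial in $F$, so the relative class of $L\cap T_R$ in $H_1(T_R,\partial T_R)$ must vanish, which by a bigon-type (innermost-arc) isotopy forces $|L\cap C|=0$; the handle-swap case is analogous. Then $\sigma:=h\,\beta\,h^{-1}$ (resp.\ $h(\alpha\gamma)h^{-1}$) does the job, and $g\sigma^{-1}$ is the desired product of reducing sphere twists.

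The main obstacle is exactly this last geometric step — that a simple closed curve whose homology is supported in one ``half'' of $F$ can be isotoped into that half, together with its analogue for the handle-swapping involution. The non-zero surface slope hypothesis is what keeps the rest of the argument under control: it is precisely what forces $\mathrm{Stab}_{GL(2,\mathbb Z)}([K])$ to have order at most two, so that at most one extra symmetry of the pair $(F,K)$ ever has to be realized by a Goeritz element; if the surface slope were zero this stabilizer could be infinite and the whole strategy would collapse.
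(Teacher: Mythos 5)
Your reverse direction is fine, and your identification of the group generated by reducing sphere twists with $\left\llangle \beta^2 \right\rrangle$ is correct but argued a little differently from the paper: the paper cites transitivity of $\GG$ on reducing spheres to see that every reducing sphere twist is a conjugate of $\beta^{\pm 2}$, whereas you observe that such a twist acts trivially on $H_1(F)$ and hence lies in $\ker(p\circ *)=\left\llangle \beta^2 \right\rrangle$; both work. You have also put your finger on the real content of the forward direction, which the paper's own proof dispatches in one sentence: if $g(K)=K'$ with $[K]=[K']$, then $g_*$ stabilizes $[K]$, and by Lemma \ref{lem:NonOrtho} that stabilizer may contain one non-identity involution $M_0$, so the case $g_*=M_0$ must be disposed of before one can conclude that $K$ and $K'$ are related by an element of the kernel.

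However, your resolution of that case has a genuine gap. The step that fails is the geometric one: a simple closed curve $L$ with $[L]=(p,0,q,0)^T$ need not be isotopic into the component $T_L$ of $F \rmv C$. The obstruction to pushing $L$ off $C$ is the geometric intersection number $i(L,C)$, which homology does not detect: the image of $\textbf{a}$ under a high power of a pseudo-Anosov in the Torelli group (or under a Dehn twist along a non-separating curve $c$ with $\aint(\textbf{a},c)=0$ but $i(\textbf{a},c)>0$ and $i(c,C)>0$) still represents $(1,0,0,0)^T$ yet has arbitrarily large geometric intersection with $C$, so it cannot be isotoped into $T_L$. Concretely, after minimizing $|L\cap C|$ the arcs of $L\cap T_R$ are essential and their total class in $H_1(T_R,\partial T_R)$ vanishes, but this only forces signed cancellation; two oppositely oriented parallel essential arcs cobound a square (two sides on $L$, two on $C$), not a bigon, and squares are not removable by an innermost-arc move. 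So the existence of $\sigma\in\GG$ with $\sigma(K)=K$ and $\sigma_*=M_0$ is not established, and the case $g_*=M_0\neq I$ remains open in your write-up. (For comparison, the paper's proof does not treat this case explicitly either; it passes directly from ``homologous and Goeritz equivalent'' to ``related by $\ker(p\circ *)$.'')
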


\begin{proof}
Recall that $\beta^2$ is a Dehn twist around the belt curve, which is a reducing sphere curve. Conjugating $\beta^2$ by elements of the Goeritz group will result in another reducing sphere twist. In fact, since the Goeritz group acts transitively on reducing spheres (\cite{SchaA3SPG2HS}), $\llangle \beta^2 \rrangle$ is the group of all reducing sphere twists, contained within the Goeritz group.

Conversely, if $K$ and $K'$ are related by a Goeritz group element, as they are homologous, they must be related by $ker(p \circ *) = \llangle \beta^2 \rrangle$, as per the discussion in Section \ref{subsection:ActionHomology}.
\end{proof}

We can now provide the second obstruction to Goeritz equivalence as a corollary to Theorem \ref{thm:ReducingSphereTwists}.

\TheoremHomotopy*

\begin{proof}
The curve $K$ represents elements of each of the two free groups $\pi_1(H)$ and $\pi_1(H')$, up to free homotopy, by pushing $K$ slightly off of $F$ into either of $H$ or $H'$. Evidently, a reducing sphere twist of $K$ will not change these free homotopy classes. 
\end{proof}

\section{Examples and applications}\label{section:examples}

\subsection{The condition of non-zero surface slope}
\label{subsection:surfaceslope}

The hypothesis that $K$ and $K'$ have non-zero surface slope in Theorem \ref{thm:Homology} is important to rule out more unpredictable behavior. For instance, consider the trite example that $K = K' = \textbf{a}$, the first generator in the standard basis, so that $\textbf{k} = [K] = \textbf{k}' = [K'] = (1, 0, 0, 0)^T$. Then conditions (\ref{cond:firstgcd}) and (\ref{cond:secondgcd}) from Theorem \ref{thm:Homology} do not even make sense. In this case, the identity certainly sends $K$ to $K'$. But Lemmas \ref{lem:GL2ZAction} and \ref{lem:NonOrtho} do not apply, and we can see that additionally, for instance, the matrices $A_{\mu} = \begin{pmatrix} 1 & \mu \\ 0 & -1 \end{pmatrix}$ gives rise to infinitely many candidate Goeritz-form matrices, and infinitely many candidate Goeritz elements, $\beta(\gamma \delta \gamma)^\mu$, for $\mu \in \mathbb{Z}$, fixing $\textbf{k}$. In fact, all of these homeomorphisms do fix the curve $K$ as well, but the infinite list exhibits a theoretical problem with any kind of systematic search. 

Or slightly less trivially, let $K = \textbf{y}$, the last generator in the standard basis, so that $\textbf{k} = [K] = (0, 0, 0, 1)^T$, and let $K'$ be the curve resulting from the band sum of $\textbf{y}$ and $\textbf{b}$ with the opposite orientation, so that $\textbf{k}' = [K'] = (0, 0, -1, 1)^T$. In this case, condition (\ref{cond:secondgcd}) makes sense, but condition (\ref{cond:firstgcd}) does not. Certainly, $\ds \textbf{k} = \textbf{k}'$. However, $\delta(K) \neq K'$, since the two curves can be distinguished in the fundamental group. Nonetheless, there exist infinite families of Goeritz group elements that induce homology automorphisms carrying $\textbf{k}$ to $\textbf{k}'$, and, in fact, carrying $K$ to $\delta(K) \neq K'$, such as $\delta \gamma \delta^n \gamma$ for any $n \in \mathbb{Z}$.

Even when conditions (\ref{cond:firstgcd}) and (\ref{cond:secondgcd}) are both meaningful, if the split product/surface slope is zero, Lemma \ref{lem:NonOrtho} will not ensure a finite list of candidate matrices, so there is no simple set of conditions to check in general. Theorem \ref{thm:Homology} is also difficult to apply directly when investigating an infinite family of curves determined by parameters. However, the techniques used here can still provide new information, as we demonstrate with the twisted torus knots $K(r^2, q, r, -q)$, with $\gcd(r,q)=1$, $1\le q < r^2$. These are non-trivial as long as $(q, r) \neq (1, 2)$ (see \cite{LeeTTKU}), and we will show that they can be distinguished up to extended Goeritz equivalence despite having zero surface slope. 

Twisted torus knots are defined in \cite{DeaSSFDSHK}. See \cite{AmoDolRatACPTTK} for the notation used here, as well as this relevant lemma, noting, however, that the third and fourth components of the homology vector differ by a sign from the lemma as stated there, as in Remark \ref{rem:NotationConvention}.

\begin{lem}\label{lem:homologyvector}
In $H_1(F)$, the twisted torus knot $K(p,q,r,n)$ is represented by the class $(q, nr, p, r)^T$.
\end{lem}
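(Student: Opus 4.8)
The plan is to read the homology class off the standard position of $K(p,q,r,n)$ on $F$, treating the twist parameter $n$ by induction; the argument is in essence the one behind the corresponding statement in \cite{AmoDolRatACPTTK}, adjusted for the orientation change recorded in Remark \ref{rem:NotationConvention}.

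First I would recall the construction of the twisted torus knot from \cite{DeaSSFDSHK}, in the notation of \cite{AmoDolRatACPTTK}: $K(p,q,r,n)$ is isotopic to a curve on $F$ that runs as the $(p,q)$-torus curve on one handle of $F$ and passes through the other handle as a bundle of $r$ coherently oriented parallel strands carrying $n$ full twists. Fixing an orientation on $K$ and the orientations on the standard generators as in Figure \ref{fig:goeritz}, each coordinate of $[K]$ is computed, up to the usual sign conventions, as a signed intersection number of $K$ with the appropriate dual curve among $\set{\textbf{a}, \textbf{x}, \textbf{b}, \textbf{y}}$ (equivalently, with an appropriate meridian disk of $H$ or $H'$). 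On the torus handle, $K$ meets the two relevant curves in $q$ and $p$ points, all of the same sign, contributing $q$ to the $\textbf{a}$-coordinate and $p$ to the $\textbf{b}$-coordinate; on the other handle the $r$ strands meet its meridian in $r$ points of the same sign, contributing $r$ to the $\textbf{y}$-coordinate. Since the $r$ strands can be routed through the second handle without altering the intersection counts on the first, this establishes the base case $[K(p,q,r,0)] = (q,0,p,r)^T$.

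Next I would analyze the twisting step. Inserting one additional full twist in the bundle of $r$ strands is a Dehn twist of $F$ along a simple closed curve $c$ that encircles the bundle once; this curve is homologous to $\textbf{x}$ (a meridian of the second handle) and meets $K$ in exactly $r$ points, all of the same sign, so $\aint([K],[c]) = \pm r$. The induced action on homology, $[K] \mapsto [K] + \aint([K],[c])\,[c]$, therefore changes only the $\textbf{x}$-coordinate, by $\pm r$, and fixes the other three. Choosing the orientation conventions so that this sign is $+$, induction on $|n|$ gives $[K(p,q,r,n)] = (q,nr,p,r)^T$.

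The real content of the proof is the bookkeeping of orientations and signs. One must choose orientations for $K$, for the strands, for the twisting curve $c$, and for the standard generators consistently, and then verify that every intersection count above carries the asserted sign — in particular that a single full twist contributes exactly $+r$ (and not $-r$, $r^2$, or $1$) to the $\textbf{x}$-coordinate, and that the $\textbf{b}$- and $\textbf{y}$-coordinates come out as $+p$ and $+r$ rather than their negatives. This last point is precisely the discrepancy flagged in Remark \ref{rem:NotationConvention}: reversing the orientations of $\textbf{b}$ and $\textbf{y}$ relative to \cite{AmoDolRatACPTTK} turns the class $(q,nr,-p,-r)^T$ obtained there into $(q,nr,p,r)^T$, so the proof reduces to carrying that sign flip through the construction and checking the $n=0$ case and the single-twist step against the figures.
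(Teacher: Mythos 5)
Your argument is sound in outline, but you should know that the paper does not actually prove this lemma: it is imported from \cite{AmoDolRatACPTTK}, where the class was computed to be $(q, nr, -p, -r)^T$, and the statement here is obtained purely by the change of orientation convention on $\textbf{b}$ and $\textbf{y}$ recorded in Remark \ref{rem:NotationConvention}. So you have supplied a self-contained derivation where the authors rely on a citation. Your route --- reading off the $n=0$ class from signed intersections with the dual curves, then realizing each additional full twist as a Dehn twist along the meridian of the second handle and applying the transvection formula $[K] \mapsto [K] + \aint([K],[\textbf{x}])\,[\textbf{x}]$, which perturbs only the second coordinate by $\aint([K],[\textbf{x}]) = \pm r$ --- is the standard way to establish the formula and is essentially what must be done in the cited paper anyway. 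The trade-off is exactly the one you flag yourself: all of the content sits in the orientation and sign bookkeeping (that the base case yields $+p$ and $+r$ rather than their negatives, and that a positive full twist contributes $+r$ rather than $-r$ to the $\textbf{x}$-coordinate), which you defer to ``checking against the figures.'' The paper sidesteps that bookkeeping by trusting the earlier computation and tracking only the sign flip on the last two coordinates; either path is legitimate, but your version is not complete until those signs are pinned down against a fixed standard position of $K(p,q,r,n)$ on $F$. One sanity check in your favor: your answer gives $SP(K) = pq + nr^2$, the known surface slope, consistent with Theorem \ref{thm:SplitProductSurfaceSlope} and with Remark \ref{rem:NotationConvention}.
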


\begin{prop}
No two of the twisted torus knots $K(r^2, q, r, -q)$, with $\gcd(r,q)=1$, $1\le q < r^2$, and $ r \ge 2$,  share an extended Goeritz equivalence class.
\end{prop}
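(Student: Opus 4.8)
The plan is to combine the paper's homological obstruction with a handlebody‑by‑handlebody argument, since the surface slope obstruction is vacuous on this family. By Lemma~\ref{lem:homologyvector} the knot $K=K(r^2,q,r,-q)$ has $[K]=(q,-qr,r^2,r)^T$, so its split product, and hence (by Theorem~\ref{thm:SplitProductSurfaceSlope}) its surface slope, is $qr^2+(-qr)r=0$; thus Theorems~\ref{thm:Homology} and~\ref{thm:Homotopy} do not apply directly. By Proposition~\ref{prop:Extended}, two members $K=K(r^2,q,r,-q)$ and $K'=K(\rho^2,\sigma,\rho,-\sigma)$ lie in the same extended Goeritz class only if $K$ and $K'$ are Goeritz equivalent, or $K$ and $\varepsilon K'$ are Goeritz equivalent, and I would dispatch these two cases in turn.

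For the first case, a Goeritz group element acts on homology as $\textbf{v}\mapsto C\textbf{v}$ on the first coordinate pair and $(C^{-1})^T$ on the second, so Lemma~\ref{lem:GL2ZAction} forces $\gcd(q,qr)=\gcd(\sigma,\sigma\rho)$ and $\gcd(r^2,r)=\gcd(\rho^2,\rho)$; as all entries are positive these read $q=\sigma$ and $r=\rho$, so the two knots coincide. For the second case, $[\varepsilon K']=\varepsilon_*(\sigma,-\sigma\rho,\rho^2,\rho)^T=(\rho,\rho^2,-\sigma\rho,\sigma)^T$, and the same $\gcd$ comparison now forces $\rho=q$ and $\sigma=r$, i.e.\ $K'=K(q^2,r,q,-r)$. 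Hence the only possible collapse between distinct members is that $K(r^2,q,r,-q)$ is Goeritz equivalent to $\varepsilon K(q^2,r,q,-r)$, with $\gcd(q,r)=1$, $q,r\ge 2$ (so $q\ne r$), $q<r^2$ and $r<q^2$ (the last two needed so both knots lie in the stated family). This case genuinely escapes the homological obstruction: since $(q,-qr)^T$ and $(r^2,r)^T$ are orthogonal, Lemma~\ref{lem:NonOrtho} does not apply, and in fact the Goeritz‑form matrices carrying $[K]$ to $[\varepsilon K(q^2,r,q,-r)]$ form an infinite one‑parameter family, so no finite arithmetic test decides it.

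To finish I would imitate the proof of Theorem~\ref{thm:Homotopy}, tracking the handlebody exchange that $\varepsilon$ performs. If $g\in\GG$ satisfies $g(K)=\varepsilon K'$, then $g$ restricts to homeomorphisms of $H$ and of $H'$; since $\varepsilon$ swaps $H$ and $H'$, such a $g$ would have to send the conjugacy class of $K(r^2,q,r,-q)$ in $\pi_1(H)\cong F_2$ to the conjugacy class of $K(q^2,r,q,-r)$ in $\pi_1(H')\cong F_2$, and the class of $K(r^2,q,r,-q)$ in $\pi_1(H')$ to that of $K(q^2,r,q,-r)$ in $\pi_1(H)$. It then suffices to show these ``crossed'' pairs of free‑group conjugacy classes cannot be matched; concretely I would read the words representing $K(p,q,r,n)$ in $\pi_1(H)$ and $\pi_1(H')$ off its twisted‑torus‑knot braid description and compare the Whitehead‑minimal lengths of their conjugacy classes. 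Alternatively, one may short‑circuit this last step by checking that $K(r^2,q,r,-q)$ and $K(q^2,r,q,-r)$ are inequivalent knots in $S^3$ (say via genus or the Alexander polynomial), since Goeritz‑equivalent curves carry the same knot type.

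The main obstacle is precisely this residual $\varepsilon$‑swap case: by design, every homological invariant the paper provides is blind to it (the surface slope is $0$, the $\gcd$'s agree, and there is an infinite list of candidate matrices), so the separation must be extracted from the finer topology of the handlebody complements — either the free‑group conjugacy classes of the twisted torus knots, made compatible with the $H\leftrightarrow H'$ exchange, or the ambient knot type.
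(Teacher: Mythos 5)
Your reduction agrees with the paper's proof up to the final step: both compute $[K]=(q,-qr,r^2,r)^T$ from Lemma \ref{lem:homologyvector}, both invoke Proposition \ref{prop:Extended} to split into a direct case and an $\varepsilon$-case, and both use Lemma \ref{lem:GL2ZAction} to force the parameters to agree in the direct case and to be swapped ($\rho=q$, $\sigma=r$) in the $\varepsilon$-case. Where you diverge is in how the $\varepsilon$-case is closed. The paper finishes it arithmetically: it writes down the block equations (\ref{eqn:ZeroSlopeEpsilonMatrix}) and (\ref{eqn:ZeroSlopeEpsilonTransposeInverse}) for a candidate $\begin{pmatrix} s & t \\ u & v\end{pmatrix}$ and concludes that the determinant $\Delta$ must equal both $1$ and $-1$, so that no Goeritz-form matrix exists and the homological obstruction already suffices. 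You instead assert that the candidate matrices form an infinite one-parameter family and that the separation must come from somewhere else. Your arithmetic claim checks out and should be taken seriously: with $C = \begin{pmatrix} 1 & 0 \\ q+r & 1 \end{pmatrix}$ one has $C(q,-qr)^T=(q,q^2)^T$ and $(C^{-1})^T(r^2,r)^T=(-qr,r)^T$, so the Goeritz-form matrix built from $C$ carries $[K(r^2,q,r,-q)]$ to $\varepsilon_*[K(q^2,r,q,-r)]$. Tracing the paper's derivation, the first row of (\ref{eqn:ZeroSlopeEpsilonTransposeInverse}) gives $\Delta(vq_2-u)=-q_1$ rather than $q_1$, which makes both equations yield $\Delta=1$ and dissolves the contradiction; the full solution set is $s=1+tq_2$, $v=1+tq_1$, $u=q_1+vq_2$ with $t$ free. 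So your diagnosis of where the genuine difficulty lies appears correct, and the paper's own finish of this case needs repair.

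That said, your proposal does not prove the proposition, because the step that would actually dispose of the $\varepsilon$-case is only a plan. ``Compare Whitehead-minimal lengths of the crossed conjugacy classes'' and ``distinguish $K(r^2,q,r,-q)$ from $K(q^2,r,q,-r)$ by genus or Alexander polynomial'' are both substantial undertakings for a two-parameter family, and neither is guaranteed to succeed: twisted torus knots exhibit many coincidences, and Section \ref{subsub:homology} of this very paper gives a pair of curves that are \emph{not} homologically Goeritz equivalent yet \emph{are} related by an extended Goeritz element, so inequivalence cannot be presumed. Moreover, the free-homotopy computation you gesture at is precisely the kind of analysis that occupies all of Section \ref{subsub:HomotopyInconclusive} for a single other family --- and there it turns out to be inconclusive. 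Until one of these computations is actually carried out, or the arithmetic is repaired so that the $\varepsilon$-case really is excluded on homological grounds, the essential case remains open in your write-up.
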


\begin{proof}
Consider $K_1 = K(r_1^2, q_1, r_1, -q_1)$ and  $K_2 = K(r_2^2, q_2, r_2, -q_2)$, with the appropriate requirements on $r_i$ and $q_i$ for $i = 1, 2$. We have $\textbf{k}_1 = (q_1, -q_1 r_1,r_1^2, r_1)^T$ and $\textbf{k}_2 = (q_2, -q_2 r_2,r_2^2, r_2)^T$. By Lemma \ref{lem:GL2ZAction}, there exists a matrix sending $(q_1, -q_1 r_1)^T$ to $(q_2, -q_2 r_2)^T$ if and only if $q_1 = q_2$, and there exists a matrix sending $(r_1^2, r_1)^T$ to $(r_2^2, r_2)^T$ if and only if $r_1 = r_2$.

On the other hand, consider, $\es \textbf{k}_2 = (r_2, r_2^2, -q_2r_2, q_2)^T$. Now, Lemma \ref{lem:GL2ZAction} says there exists a matrix sending $(q_1, -q_1 r_1)^T$ to $(r_2, r_2^2)^T$ if and only if $q_1 = r_2$, and there exists a matrix sending $(r_1^2, r_1)^T$ to $(-q_2r_2, q_2)^T$ if and only if $r_1 = q_2$. Then, we are seeking a matrix in $GL(2, \mathbb{Z})$ that satisfies
\begin{equation} \label{eqn:ZeroSlopeEpsilonMatrixNonReplacement} \begin{pmatrix} s & t \\ u & v \end{pmatrix} \begin{pmatrix} q_1 \\ -q_1r _1\end{pmatrix} = \begin{pmatrix} r_2 \\ r_2^2 \end{pmatrix}  
\end{equation}
\begin{equation} \label{eqn:ZeroSlopeEpsilonTransposeInverseNonReplacement}
\Delta \begin{pmatrix} v & -u \\ -t & s \end{pmatrix} \begin{pmatrix} r_1^2 \\ r_1 \end{pmatrix} = \begin{pmatrix} -q_2r_2 \\ q_2 \end{pmatrix}.
\end{equation}

Here, $\Delta= sv - tu (= \pm 1)$ is the determinant of the sought matrix.

With the replacements $r_2 = q_1$ and $r_1 = q_2$, these become:

\begin{equation} \label{eqn:ZeroSlopeEpsilonMatrix} \begin{pmatrix} s & t \\ u & v \end{pmatrix} \begin{pmatrix} q_1 \\ -q_1q _2\end{pmatrix} = \begin{pmatrix} q_1 \\ q_1^2 \end{pmatrix}  
\end{equation}
\begin{equation} \label{eqn:ZeroSlopeEpsilonTransposeInverse}
\Delta \begin{pmatrix} v & -u \\ -t & s \end{pmatrix} \begin{pmatrix} q_2^2 \\ q_2 \end{pmatrix} = \begin{pmatrix} -q_1q_2 \\ q_2 \end{pmatrix}.
\end{equation}
Then (\ref{eqn:ZeroSlopeEpsilonMatrix}) gives $s - tq_2 = 1$ and $u - v q_2 = q_1$. On the other hand, from (\ref{eqn:ZeroSlopeEpsilonTransposeInverse}), we have $\Delta (vq_2 - u) = q_1$ and $\Delta(s - t q_2) = 1$. Substituting the first two resulting equations into the second two resulting equations, we find that $\Delta$ must be both $1$ and $-1$, a contradiction.
\end{proof}

\subsection{An example applying the homology obstruction}
\label{subsection:homologysuff}

Here we discuss a family of examples, which are twisted torus knots, for which the homology obstruction is sufficient to show two curves are not related by an extended Goeritz element. This example provides a more streamlined proof of previous results of the authors, with E. Amoranto.

In \cite{AmoDolRatACPTTK}, the authors, along with E. Amoranto, prove the following:

\begin{thm}
\label{thm:AdditionalCasesPositiveTwistedTorusKnots}
For integers $k$, $q$ and $m$ with $q>2$, $1 \le m < q$, $\gcd(q, m) = 1$, and $k = 0$ or $k\ge 2$ , the twisted torus knots $K_1 = K(kq + m, q, m, -1)$ and $K_2 = K(kq+ q- m, q, q-m, -1)$ are not related by an extended Goeritz element. 
\end{thm}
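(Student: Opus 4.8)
The plan is to apply the homology obstruction, Theorem \ref{thm:Homology} (together with Proposition \ref{prop:Extended}), and show it already rules out an extended Goeritz equivalence between $K_1$ and $K_2$. First I would record the homology vectors using Lemma \ref{lem:homologyvector}: for $K_1 = K(kq+m, q, m, -1)$ we get $[K_1] = (q, -m, kq+m, m)^T$, and for $K_2 = K(kq+q-m,q,q-m,-1)$ we get $[K_2] = (q, -(q-m), kq+q-m, q-m)^T$. Before invoking Theorem \ref{thm:Homology} I must check its hypotheses: both knots are nontrivial (this is where the constraints $q>2$, $1\le m<q$, $\gcd(q,m)=1$, and $k=0$ or $k\ge 2$ are needed; cite the relevant nontriviality result for twisted torus knots), and both have non-zero surface slope — by Theorem \ref{thm:SplitProductSurfaceSlope} the surface slope of $K_1$ is $SP([K_1]) = q(kq+m) - m\cdot m = kq^2 + qm - m^2$ and of $K_2$ is $q(kq+q-m) - (q-m)^2 = kq^2 + q^2 - qm - (q-m)^2 = kq^2 + qm - m^2$, so in fact the surface slopes agree, consistent with condition (\ref{cond:slopes}), and one checks this common value $kq^2 + qm - m^2$ is non-zero under the stated hypotheses.

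Next I would dispatch the easy conditions. Conditions (\ref{cond:firstgcd}), (\ref{cond:secondgcd}), (\ref{cond:slopes}) all hold: $\gcd(q,-m) = \gcd(q,q-m) = 1$ since $\gcd(q,m)=1$; $\gcd(kq+m,m) = \gcd(kq+q-m, q-m) = \gcd(q,m) = 1$; and the slopes agree by the computation above. So the obstruction comes down to the divisibility conditions (\ref{cond:topleft})--(\ref{cond:bottomright}) and their primed variants (\ref{cond:topleftprime})--(\ref{cond:bottomrightprime}), with modulus $N := ab+xy = kq^2 + qm - m^2$. Substituting $(a,x,b,y) = (q,-m,kq+m,m)$ and $(a',x',b',y') = (q,-(q-m),kq+q-m,q-m)$ into the four expressions $a'b+xy'$, $a'y-ay'$, $bx'-b'x$, $x'y+ab'$ — and into the four primed expressions — I would compute each modulo $N$ and show that in neither the unprimed nor the primed case are all four simultaneously $\equiv 0 \pmod N$. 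For instance $a'y - ay' = q\cdot m - q\cdot(q-m) = q(2m-q)$, and one argues $N \nmid q(2m-q)$ using $0 < q(q-2m) < $ something comparable to $N$, or a gcd argument with $\gcd(q, N)$; similar elementary estimates handle the others. Then Proposition \ref{prop:Extended} requires additionally checking the case where $K_1$ is homologically Goeritz equivalent to $\varepsilon K_2$, i.e.\ replacing $[K_2]$ by $[\varepsilon K_2] = (q-m, kq+q-m, -(q-m), q)^T$ and re-running the same checks; here conditions (\ref{cond:firstgcd})/(\ref{cond:secondgcd}) themselves likely already fail (e.g.\ $\gcd(q-m, kq+q-m)$ versus $\gcd(q,m)=1$), giving a quick contradiction, but if not, the divisibility conditions finish it.

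The main obstacle I anticipate is purely computational bookkeeping: there are two modulus computations (the $\varepsilon$ and non-$\varepsilon$ cases), each with four unprimed and four primed expressions, so sixteen residues modulo $N = kq^2+qm-m^2$ to evaluate, and I will want to organize these cleanly — perhaps by first reducing $kq \equiv m^2 - qm \pmod{?}$ or by exploiting the $q \mid$ structure — rather than expanding blindly. The conceptually delicate point is making the non-divisibility claims rigorous: a claim like "$N \nmid E$" needs either a clean size estimate ($0 < |E| < N$, or $|E|$ lies strictly between consecutive multiples of $N$) or a prime/gcd argument, and I should be careful that the boundary cases $k=0$ versus $k\ge 2$, and $m$ near $q/2$, don't sneak past the estimate. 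If a uniform size bound is awkward, the fallback is to show $\gcd(N, q) \mid$-type obstructions or to note that all four expressions being divisible by $N$ would force, via back-substitution as in the proof of Lemma \ref{lem:NonOrtho}, an integer matrix whose determinant is forced to be simultaneously $+1$ and $-1$, the same contradiction mechanism used in Section \ref{subsection:surfaceslope}.
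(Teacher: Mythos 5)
Your overall strategy (the homology obstruction of Theorem \ref{thm:Homology} plus Proposition \ref{prop:Extended}) is the right one, but a computational error in condition (\ref{cond:secondgcd}) derails it. You assert $\gcd(kq+m,m)=\gcd(kq+q-m,q-m)=\gcd(q,m)=1$; in fact, since $\gcd(q,m)=1$, one has $\gcd(kq+m,m)=\gcd(kq,m)=\gcd(k,m)$ and $\gcd(kq+q-m,q-m)=\gcd(kq,q-m)=\gcd(k,q-m)$, and these are not $1$ in general. For $k=0$ they equal $m$ and $q-m$ respectively, which can only agree if $q=2m$, forcing $m=1$ and $q=2$, excluded by hypothesis. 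So for $k=0$ condition (\ref{cond:secondgcd}) already fails, and the theorem follows with no divisibility analysis whatsoever; the $\varepsilon$ case is dispatched the same way, since $\gcd(q-m,q-m)=q-m$ must equal $\gcd(q,-m)=1$ and $\gcd(m-q,q)=1$ must equal $\gcd(m,m)=m$, again forcing $q=2$. This three-line gcd argument is the paper's entire proof. Note also that the paper only proves the $k=0$ case here, citing prior work for $k\ge 2$.

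By miscomputing these gcds you push the whole burden onto conditions (\ref{cond:topleft})--(\ref{cond:bottomright}) and their primed variants, and that part of your proposal is a genuine gap: the sixteen non-divisibility claims are only gestured at (``one argues \dots similar elementary estimates handle the others''), and there is real reason to doubt they close uniformly. The gcd conditions genuinely can hold for $k\ge 2$ (e.g.\ $k=5$, $q=7$, $m=3$ gives $\gcd(5,3)=\gcd(5,4)=1$), so the divisibility step cannot be skipped there, and Section \ref{subsection:homologyhomotopy} of the paper shows that for the excluded value $k=1$ the homology obstruction fails entirely in the $\varepsilon$ case (the knots are in fact extended-Goeritz equivalent). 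So a uniform-in-$k$ proof by the homology obstruction alone is not merely unfinished bookkeeping; it is not known to exist, which is presumably why the paper handles $k\ge 2$ by citation and $k=0$ by the gcd observation you missed.
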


The result was proven for $k \geq 2$ by the first author in \cite{DolFPSTTK}, and in \cite{AmoDolRatACPTTK}, the authors prove it for $k = 0 $. In that work, the authors suspected that the knots would be related by an extended Goeritz element when $k=1$. As another application, we demonstrate that the homology obstruction suffices in the case of $k =0 $, and later investigate the remaining case that $k =1$, showing that the additional homotopy obstruction is not sufficient to draw a final conclusion.

\begin{proof}[Proof of Theorem \ref{thm:AdditionalCasesPositiveTwistedTorusKnots} when $k=0$.]
Using Lemma \ref{lem:homologyvector}, we see that, for $k=0$, the homology representatives of $K_1$ and $K_2$ are $\textbf{k}_1 = (q, -m, m, m)^T$ and $\textbf{k}_2 = (q, m-q , q - m, q-m)^T$, respectively. 

Let $d = \gcd(q, -m) = 1 = \gcd(q, m-q) = d'$, and $f = \gcd(m, m) = m$, $f' = \gcd(q-m, q-m) = q-m$. Thus, in order for $f$ to equal $f'$, as $m$ and $q$ are relatively prime, $m = 1$, so that $q = 2$, which is excluded by hypothesis.

If instead we consider $\es \textbf{k}_2 = (q-m, q-m, m-q, q)^T$, we calculate $d' = \gcd(q-m, q-m) = q-m$, and $f' = \gcd(m-q, q) = 1$. Then $d = d'$ implies that $1 = q-m$ and $f = f'$ implies that $m = 1$, which again forces $q = 2$. 
\end{proof}

\subsection{An example applying both homology and homotopy obstructions}
\label{subsection:homologyhomotopy}

Let us investigate the case that $k=1$ for the knots from Theorem \ref{thm:AdditionalCasesPositiveTwistedTorusKnots}. Consider the standard representation of $K_1$, shown in Figure \ref{fig:K_1}. The black bar on the left indicates the merging of the $q$ thinner arcs marked with a single arrow, which we will refer to as \emph{blue} arcs, with the $m$ thicker arcs marked with double arrows, which we will refer to as \emph{red} arcs. These are really two parts of the same curve. We call the black bar in this figure a \textit{switch}.

Figure \ref{fig:K_2} shows the standard representation of $K_2$ as a curve with a switch, with the $q$ thinner single-arrowed arcs, referred to as \emph{purple}, merging with the $q-m$ thicker double-arrowed arcs, referred to as \emph{pink}.

That the knots are isotopic and both positions are primitive/primitive with identical surface slopes is proven in \cite{AmoDolRatACPTTK}. In this specific example, Kenneth Baker showed that it is actually possible to view the knots $K_1$ and $K_2$ as sitting on the boundary of a thickened genus 1 page $P$ of a fibered knot in such a way that pushing $P$ through the monodromy carries $K_1$ to $K_2$, exchanging the two handlebodies bounded by two copies of $P$, verifying that they are, in fact, related by an extended Goeritz element \cite{BakPC}. Nonetheless, we employ the methods of this paper in the context of this example to demonstrate potential applications.

\begin{center}
\begin{figure}[h]
\begin{tikzpicture}
    \node[anchor=south west,inner sep=0] (image) at (0,0) {\includegraphics[width=\linewidth]{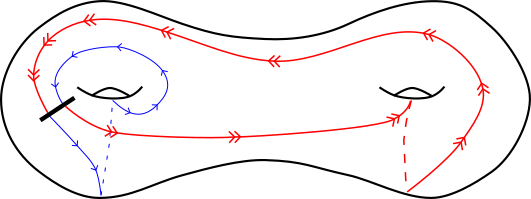}};
    \begin{scope}[
        x={(image.south east)},
        y={(image.north west)}
    ]
        \node [font=\bfseries] at (0.1,0.31) {$q$};
        \node [font=\bfseries] at (0.17,0.44) {$m$};
    \end{scope}
\end{tikzpicture}
\caption{The curve $K_1$, represented with a switch. That is, the sub-arcs labeled $q$ (thin, single-arrow, blue) and $m$ (thick, double-arrow, red) represent $q$ and $m$ parallel strands, respectively, so that at the black bar, $q+m$ strands connect with $m+q$ strands.}\label{fig:K_1}
\end{figure}
\end{center}

\begin{center}
\begin{figure}[h]
\begin{tikzpicture}
    \node[anchor=south west,inner sep=0] (image) at (0,0) {\includegraphics[width=\linewidth]{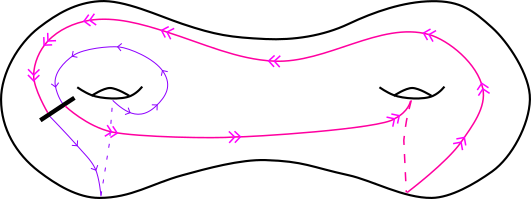}};
    \begin{scope}[
        x={(image.south east)},
        y={(image.north west)}
    ]
        \node [font=\bfseries] at (0.1,0.31) {$q$};
        \node [font=\bfseries] at (0.28,0.27) {$q-m$};
    \end{scope}
\end{tikzpicture}
\caption{The curve $K_2$, represented with a switch. Here, the sub-arcs labeled $q$ (thin, single-arrow, purple) and $q-m$ (thick, double-arrow, pink) represent $q$ and $q-m$ parallel strands, respectively, so that at the black bar, $q + (q-m)$ strands connect with $(q-m) + q$ strands.}
\label{fig:K_2}
\end{figure}
\end{center}

\subsubsection{The homology obstruction is inconclusive}
\label{subsub:homology}


For the curves $K_1$ and $K_2$, the homology representatives are, respectively, $\textbf{k}_1 = (q, -m, q + m, m)^T$ and $\textbf{k}_2 = (q, m-q , 2q - m, q-m)^T$. Then, $\gcd(q,m) = 1 = \gcd(q, m-q)$, $ \gcd(q+m, m) = 1 = \gcd(2q-m, q-m)$, and $SP(\textbf{k}_1) = q^2 + mq -m^2 = SP(\textbf{k}_2)$. We find no immediate obstruction.

The vectors $(q, -m)^T$ and $(q + m, m)^T$ are non-orthogonal, as $\gcd(q,m)= 1$. Similarly, $(q, m-q)^T$ and $(2q-m, q-m)^T$ are non-orthogonal, so there are at most two Goeritz matrices carrying $\textbf{k}_1$ to $\textbf{k}_2$. As our vectors are defined in terms of parameters, the forms of the matrices from Lemma \ref{lem:NonOrtho} are not immediately helpful. So we must dig a bit deeper into the question of when a given matrix over $\mathbb{Q}$ is invertible over $\mathbb{Z}$. Suppose $M \in GL(2, \mathbb{Z})$ carries $\textbf{k}_1$ to $\textbf{k}_2$ and denote $\det(M)$ by $\Delta$. Let $M = \begin{pmatrix} s & t \\ u & v \end{pmatrix}$. Then 
\[\begin{pmatrix} s & t \\ u & v \end{pmatrix} \begin{pmatrix} q \\ -m \end{pmatrix} = \begin{pmatrix} q \\ m-q \end{pmatrix},\] and we get two equations, $sq - tm = q$ and $uq - vm = m-q$. So let $l = \frac{t}{q} = \left(\frac{s-1}{m}\right)$, and $h = \left(\frac{v+1}{q}\right) = \left(\frac{u+1}{ \phantom{q} m \phantom{q} }\right)$.

Then,
\[ s = lm + 1, \qquad t = lq, \qquad u = hm -1, \qquad v = hq -1.\]
In order for $s, t, u, v \in \mathbb{Z}$, since $m$ and $q$ are relatively prime, we find that $l, h \in \mathbb{Z}$.

 $M = \begin{pmatrix} lm + 1 & lq \\ hm - 1 & hq -1 \end{pmatrix}$. 

Now we consider $\left( M^T \right)^{-1} = \Delta \begin{pmatrix} hq -1 & 1-hm \\ -lq & lm + 1 \end{pmatrix} $. We also have that 
\[\Delta \begin{pmatrix} hq -1 & 1-hm \\ -lq & lm + 1 \end{pmatrix} \begin{pmatrix} q +m \\ m \end{pmatrix} = \begin{pmatrix} 2q - m \\ q-m \end{pmatrix}.\] Using the second row, we have that $-lq(q+m)+ (lm+1)m = \Delta(q-m)$, which gives $m - lq^2 = (lm+\Delta)(q-m)$. If $l > 0$, the left hand side of this equation is negative because $q>m$, while the right hand side is at least 0. If $l = 0$, it must be that $m=\Delta(q-m)$, which either says that $q=0$ or $q =2m$, both of which are impossible. Finally, if $l<0$, then $m - lq^2>0$, so $lm+\Delta>0$ as well. Since $l<0$, $m>0$, and $\Delta = \pm 1$, there are no possible solutions to this inequality. 

Hence, we see that $K_1$ and $K_2$ are not Goeritz equivalent. However, we might still check for elements of the Goeritz group that send $(q, -m, q + m, m)^T$ to $\varepsilon_*(q, m-q , 2q - m, q-m)^T = (q-m, 2q-m, m-q, q)^T$.


To that end, we compute,  $ \gcd(q-m, 2q-m) = 1 = \gcd(q, -m)$, $\gcd(m-q, q) = 1 = \gcd(q+m,m)$, and of course $SP(\varepsilon_*(\textbf{k}_2)) = q^2 + qm -m^2 = SP(\textbf{k}_1)$. Finding again no immediate obstruction, we suppose $M = \begin{pmatrix} s & t \\ u & v \end{pmatrix} \in GL(2, \mathbb{Z})$, with $\det(M) = \Delta$, carries $\textbf{k}_1$ to $\es \textbf{k}_2$. Then,
\[\begin{pmatrix} s & t \\ u & v \end{pmatrix} \begin{pmatrix} q \\ -m \end{pmatrix} = \begin{pmatrix} q-m \\ 2q -m  \end{pmatrix},\]
and, $(s-1) q = (t-1) m$ and $(u-2)q = (v-1)m$. 

So let $l = \left(\frac{t-1}{q}\right) = \left(\frac{s-1}{ \phantom{q} m\phantom{q} }\right)$, and $h = \left(\frac{v-1}{q}\right) = \left(\frac{u-2}{ \phantom{q} m \phantom{q} }\right)$.
Then,
\[ s = lm + 1, \qquad t = lq+1, \qquad u = hm +2, \qquad v = hq +1.\]
Again, in order for $s, t, u, v \in \mathbb{Z}$, since $m$ and $q$ are relatively prime, we find that $l, h \in \mathbb{Z}$. Now we consider $\left(M^T \right)^{-1} = \Delta \begin{pmatrix} hq +1 & -(hm+2) \\ -(lq+1) & lm + 1 \end{pmatrix} $.

From the first row of 
\[\Delta \begin{pmatrix} hq +1 & -(hm+2) \\ -(lq+1) & lm + 1 \end{pmatrix} \begin{pmatrix} q +m \\ m \end{pmatrix} = \begin{pmatrix} m-q \\ q \end{pmatrix},\]

 $(hq+1)(q+m) - (hm+2)m = \Delta(m-q)$. After some rearrangement of this equation, we find that $(q-m)(hm+1 + \Delta) = -hq^2$. Since the sign of $-hq^2$ is determined by the sign of $h$, we consider $h>0$, $h<0$, and $h = 0$. 
 
 If $h>0$, then $-hq^2<0$ while $(q-m)(hm+1 + \Delta)>0$, a contradiction. If $h<0$, then $-hq^2>0$. Since $m<q$, $q-m$ is always positive, so the sign of $(q-m)(hm+1 + \Delta)$ depends only on the sign of $hm + 1 + \Delta$. This expression is positive exactly when $hm > -1 - \Delta$. Since $h<0$, this forces $\Delta = 1$, $m = 1$, and $h = -1$. With these considerations, we rewrite $M$  as $\begin{pmatrix} l+1 & lq+1 \\ 1 & 1-q \end{pmatrix} $. Since we also have that $\Delta = 1$, we obtain the additional equation $l - q - 2lq = 1$, which does not allow for $l$ and $q$ to be integers simultaneously while $q > 2$. (To see this, for instance, observe that $l = \frac{1 + q }{1 - 2q}$ is an increasing function of $q$ with value $-1$ at $q = 2$, and $l < 0$ for all $q > 2$.)
   
 Finally, suppose $h = 0$, so that $(q-m)(1+\Delta) = 0$. Since $q$ and $m$ are relatively prime, we have $\Delta = -1$. Now, rewriting $M$, we have
 \[ M = \begin{pmatrix} lm + 1 & lq+1 \\ hm +2 & hq +2 \end{pmatrix} = \begin{pmatrix} lm + 1 & lq+ 1 \\ 2 & 1 \end{pmatrix}.\]
 But the determinant restriction then says that $l(m-2q) = 0$, which forces $l = 0$, as $q$ and $m$ are relatively prime. 
 
Thus, we find exactly one candidate to give rise to a Goeritz matrix,
\[M = \begin{pmatrix} 1 & 1 \\ 2 & 1 \end{pmatrix}.\]

This matrix factors (see Remark \ref{rem:Factorization}) into $p( \as \bs \gs \ds^{-1} \gs \ds^2 ).$
 
This means 
\[ \as \bs \gs \ds^{-1} \gs \ds^2 [K_1] = \es [K_2]  .\] Let $g = \alpha \beta \gamma \delta^{-1} \gamma \delta^{2} $. (It is worth noting that this differs subtly from the map listed at the end of \cite{AmoDolRatACPTTK}, due to changes in conventions.) 

From this analysis then, we learn that $K_1$ and $K_2$ are not homologically Goeritz equivalent, but that they are related by an extended Goeritz element, and specifically that $g(K_1)$ and $\varepsilon(K_2)$ are homologous. So, $K_1$ and $K_2$ are related by an extended Goeritz element if and only if $g(K_1)$ and $\varepsilon(K_2)$ are related by a sequence of reducing sphere twists, as in Theorem \ref{thm:ReducingSphereTwists}.

\subsubsection{The homotopy obstruction is inconclusive}
\label{subsub:HomotopyInconclusive}

Finally, we explore the homotopy obstruction. We begin with a generally helpful observation about words in the fundamental group of a genus 2 handlebody representing curves on the boundary.

\begin{lem} \label{lem:cyclicpallindrome}
If $w$ is a word in the fundamental group of a genus 2 handlebody $H$ representing a curve $K$ on the boundary, with $K$ not null-homologous, then the word is cyclically palindromic, i.e., $w$ is cyclically equivalent to its reverse.
\end{lem}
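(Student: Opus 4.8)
The plan is to exploit the structure of the genus $2$ handlebody $H$ as a regular neighborhood of a wedge of two circles, together with the observation that a curve $K$ on $\partial H$ bounds no disk in $H$ but nevertheless separates $\partial H$ from... no: rather, the key geometric input is that $K$ lies on the \emph{boundary} of $H$, and $\partial H$ carries an orientation-reversing involution relative to the spine. More concretely: let $x_1, x_2$ be free generators of $\pi_1(H)$ dual to a complete disk system $D_1, D_2$ for $H$. A simple closed curve $K$ on $\partial H$ intersects $D_1 \cup D_2$ in some finite collection of points, and reading off the signed intersections along $K$ yields a cyclic word $w = w(x_1, x_2)$ representing the free homotopy class of $K$ in $\pi_1(H)$. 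The first step is to set up this dictionary carefully and to record that, since $K$ is a simple closed curve, the word $w$ is reduced and \emph{cyclically} reduced once we pass to the conjugacy class.

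Next I would invoke the central fact about curves on handlebody boundaries: a curve $K \subset \partial H$ cobounds, together with arcs of $\partial D_i$, a system of bands, and pushing $K$ across $H$ versus pushing it the other way exhibits the cyclic word and its reverse as both being carried by $K$. The cleanest route is via the hyperelliptic-type symmetry: the genus $2$ surface $\partial H$ admits an involution fixing $\partial D_1, \partial D_2$ setwise and reversing their orientations; this involution extends over $H$ (it is, concretely, built from the elements $\alpha$, $\gamma$ already in play, or the standard hyperelliptic involution of the genus $2$ handlebody). Applying it to $K$ fixes the free homotopy class of $K$ in $\pi_1(H)$ but sends the word $w$ to its formal reverse $\overline{w}$ (orientation reversal on the dual disks flips the order in which intersection points are read and inverts... here one must be careful: orientation reversal sends $x_i \mapsto x_i^{-1}$ and reverses the reading order, and the composite of "reverse the word" with "invert every letter" is exactly the inverse word, so an additional argument is needed to land on the reverse rather than the inverse). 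I expect this sign-bookkeeping — pinning down precisely which symmetry of $\partial H$ both extends over $H$ and induces \emph{reversal} (not inversion) on the cyclic word — to be the main obstacle; the hypothesis that $K$ is not null-homologous is presumably what rules out the degenerate cases where $w$ is a proper power or empty and guarantees the conjugacy class is nontrivial so that "cyclically palindromic" is a meaningful constraint.

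Finally, having produced an automorphism (or the identity up to free homotopy) realizing $w \simeq \overline{w}$ as conjugacy classes, I would translate "conjugate to its reverse" into the stated conclusion that $w$ is cyclically equivalent to its reverse, i.e.\ equals its reverse as a \emph{cyclic} word. This last translation is immediate once one knows two cyclically reduced words in a free group represent the same conjugacy class iff they are cyclic rotations of one another. So the proof decomposes as: (i) build the word–curve dictionary; (ii) identify the boundary symmetry inducing reversal and check it extends over $H$; (iii) conclude via the conjugacy-class characterization, using non-null-homologousness to discard trivial cases.
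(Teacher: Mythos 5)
You have the right geometric input --- the hyperelliptic-type involution $\alpha$ of the genus $2$ handlebody, which is exactly what the paper uses --- and you correctly isolate the delicate point, but you leave that point unresolved, and it is the entire content of the proof. The fix is not to hunt for a symmetry of $\partial H$ that induces ``reversal rather than inversion''; no such symmetry is needed. Instead, compute the effect of $\alpha$ on the conjugacy class of $K$ in $\pi_1(H)$ in \emph{two independent ways} and equate the answers. On the one hand, $\alpha$ carries the disk system to itself with orientations reversed, so the induced map on $\pi_1(H)$ is the homomorphism sending each generator to its inverse; applied to $w = w_n w_{n-1}\cdots w_1$ this yields $w_n^{-1} w_{n-1}^{-1}\cdots w_1^{-1}$, with each letter inverted but the \emph{order preserved}, because a homomorphism does not reverse the order of a product. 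On the other hand, $\alpha$ carries $K$ to $K$ with its orientation reversed (it does not fix the oriented free homotopy class, as you assert at one point), so $\alpha_*(w)$ must also represent $w^{-1} = w_1^{-1} w_2^{-1}\cdots w_n^{-1}$, in which the letters are inverted \emph{and} the order is reversed. Equating these two expressions as cyclic words shows that the letter-wise inverse of $w$ is cyclically equivalent to its own reverse; since letter-wise inversion commutes with reversal and with cyclic permutation, the same holds for $w$. This double-counting is precisely the step your proposal flags as ``the main obstacle'' and defers; without it you only obtain the ambiguous relation between $w$, its inverse, and its letter-wise inverse that you describe, which is not the statement of the lemma.

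Two smaller remarks: the material about bands and ``pushing $K$ across $H$'' plays no role and should be discarded; and your step (iii), translating conjugacy of cyclically reduced words into cyclic rotation, is the correct (and implicitly used) way to pass from free homotopy classes to cyclic words, so that part of your outline matches the paper.
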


\begin{proof}
Suppose $w = w_n w_{n-1} \cdots w_2 w_1$ is the word representing a curve $K$, which is not null-homologous, up to cyclic permutation, with respect to a pair of curves $c$ and $d$ each bounding a disk in $H$. Consider $\alpha$ as a homeomorphism of the handlebody, so that $\alpha$ induces an isomorphism on $\pi_1(H)$. Specifically, if we let $c' = \alpha(c)$, and $d' = \alpha(d)$, then $\alpha_*(w) = \alpha_*(w_n) \alpha_*(w_{n-1}) \cdots \alpha_*(w_2) \alpha_*(w_1)$ represents the pattern of intersections between $\alpha(K)$ and the two disks $c'$ and $d'$. Now, $\alpha$ simply reverses the orientation of every curve on the boundary of $H$. So on the one hand, this means that $c'$ and $d'$ are just the same curves $c$ and $d$, respectively, with opposite orientations, so that $\alpha_*(w) = w_n^{-1} w_{n-1}^{-1} \cdots w_2^{-1} w_1^{-1}.$ But on the other hand, $\alpha$ also reverses the orientation of $K$, so that $\alpha_*(w) = w_1^{-1} w_2^{-1} \cdots w_{n-1}^{-1} w_n^{-1}$. 
\end{proof}

Next, we establish some patterns about the curves $g(K_1)$ and $\varepsilon(K_2)$ on the surface. 

As a transitional state, Figure \ref{fig:gammadeltasquaredK_1} shows $K_1$ after $\gamma \delta^2$ has been applied, and Figure \ref{fig:finalK_1} shows $K_1$ after $g = \alpha \beta \gamma \delta^{-1} \gamma \delta^2$ has been applied. 

\begin{center}
\begin{figure}[h]
\begin{tikzpicture}
    \node[anchor=south west,inner sep=0] (image) at (0,0) {\includegraphics[width=\linewidth]{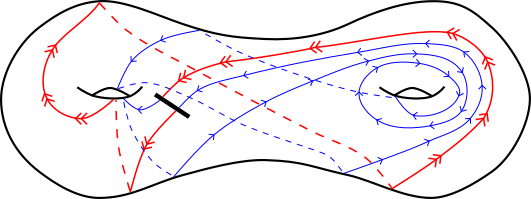}};
    \begin{scope}[
        x={(image.south east)},
        y={(image.north west)}
    ]
        \node [font=\bfseries] at (0.32,0.35) {$m$};
        \node [font=\bfseries] at (0.29,0.43) {$q$};
    \end{scope}
\end{tikzpicture}
\caption{The curve $\gamma\delta^2(K_1)$.}\label{fig:gammadeltasquaredK_1}
\end{figure}
\end{center}

\begin{center}
\begin{figure}[h]
\begin{tikzpicture}
    \node[anchor=south west,inner sep=0] (image) at (0,0) {\includegraphics[width=\linewidth]{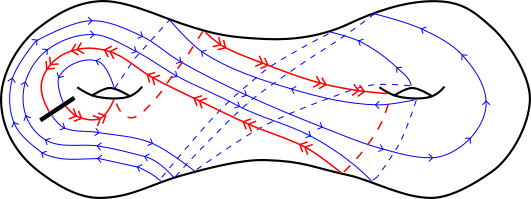}};
    \begin{scope}[
        x={(image.south east)},
        y={(image.north west)}
    ]
        \node [font=\bfseries] at (0.095,0.53) {$m$};
        \node [font=\bfseries] at (0.13,0.57) {$q$};
    \end{scope}
\end{tikzpicture}
\caption{The curve $g(K_1) = \alpha\beta\gamma\delta^{-1}\gamma\delta^2 (K_1)$.}\label{fig:finalK_1}
\end{figure}
\end{center}

Figure \ref{fig:epsK_2} shows $K_2$ after $\varepsilon$ has been applied. Note that $\as \bs \gs \ds^{-1} \gs \ds^2 [K_1] = \es [K_2] = (q-m,2q-m,m-q, q)^T$, as expected. 

\begin{center}
\begin{figure}[h]
\begin{tikzpicture}
    \node[anchor=south west,inner sep=0] (image) at (0,0) {\includegraphics[width=\linewidth]{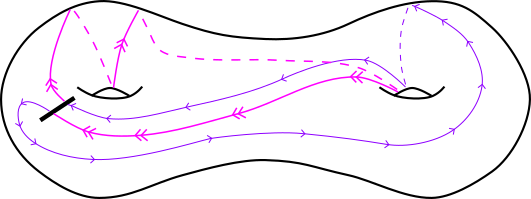}};
    \begin{scope}[
        x={(image.south east)},
        y={(image.north west)}
    ]
        \node [font=\bfseries] at (0.12,0.325) {$q-m$};
        \node [font=\bfseries] at (0.18,0.46) {$q$};
    \end{scope}
\end{tikzpicture}
\caption{The curve $\varepsilon ( K_2 )$.}\label{fig:epsK_2}
\end{figure}
\end{center}

For any specific values of $q$ and $m$, checking the homotopy obstruction is straightforward. Doing so for the entire family, however, entails capturing the homotopy behavior for each infinite family of curves. Nonetheless, a surprisingly involved, but hopefully enlightening, set of calculations show that there is no homotopy obstruction to the curves being Goeritz equivalent.

Given the pair of relatively prime positive integers $q$ and $m$ with $1 \leq m < q$. Let $d$ be the quotient and $r$ the remainder when $q$ is divided by $m$, so that $q = dm + r$ with $r < m$ (note that $0 < r$ since $\gcd(q, m) = 1$). Let $\widetilde{m} = q-m$, and define $\widetilde{d}$ and $\widetilde{r}$ to be the quotient and remainder when $q$ is divided by $\widetilde{m}$. Observe that if $m < \frac{q}{2}$, then $d \geq 2$, $m < (q-m)$, so that $\widetilde{d} = 1$, $\widetilde{r} = m$, while if $m > \frac{q}{2}$, then $d = 1$, $r = \widetilde{m}$.

Define the following four sequences as follows:

\begin{enumerate}

\item[($z_n$)] For each $-1 \leq n \leq m-1$, let $z_n$ be the smallest integer so that $m z_n \geq (n+1) r$. 

\item[($\widetilde{z}_n$)] For each $-1 \leq n \leq q-m-1$, let $\widetilde{z}_n$ be the smallest integer so that $(q-m) \widetilde{z}_n \geq (n+1) \widetilde{r}$.

\item[($p_n$)] For each $0 \leq n \leq m-1$, $p_n = d + (z_n - z_{n-1})$.

\item[($\widetilde{p}_n$)] For each $0 \leq n \leq q-m-1$, $\widetilde{p}_n = \widetilde{d} + (\widetilde{z}_n -\widetilde{z}_{n-1})$.
\end{enumerate}

\begin{rem} \label{rem:pptilde} We can think of each of these sequences as functions of the parameters $q$ and $m$. Then observe that $z(q, \widetilde{m}) = \widetilde{z}(q, m)$, and that $p(q, \widetilde{m}) = \widetilde{p}(q, m)$.
\end{rem}

\begin{rem} \label{rem:ddplus1} It follows from the definition of $(z_n)$ that for all $0 \leq n \leq m-1$, either $z_n = z_{n-1}$, or $z_n = z_{n-1} + 1$, and so by the definition of $(p_n)$, for all $0 \leq n \leq m-1$, $p_n \in \set{d, d+1}$. Similarly, for all $0 \leq n \leq q-m-1$, $\widetilde{p}_n \in \set{\widetilde{d}, \widetilde{d}+1}$. And specifically, $z_{-1} = \widetilde{z}_{-1} = 0$, $z_0 = \widetilde{z}_0 = 1$, $p_0 = d+1$, and $\widetilde{p}_0 = \widetilde{d}+1$.
\end{rem}

If we envision the switch of the curve $g(K_1)$, as in Figure \ref{figure:switchK1}, we can label each of the strands crossing the switch from 0 to $q+m-1$. Beginning at the switch at $0$, we can trace the curve along the blue strand, returning to the switch at $m$, then trace along another blue strand, returning at $2m$, and so on, continuing along blue strands until the first positive integer $k$ for which $km \geq q$, at which point the incoming blue strand will cross-over to red at the switch. Observe that $k = d+1 = p_0$, as $km \geq q$, but $k$ is the least positive integer with this property, and $p_0$ blue arcs have been traversed. Continuing further along the curve traverses a single red strand, returning to the switch at $(d+1)m - q = dm + m - (dm + r) = m - r = z_0m - r$. 

Proceeding along the curve will, again, traverse some number of blue strands, shifting indices at the switch by $m$ each time, until the first time that the index reaches or exceeds $q$. Now, $z_1$ is the smallest integer so that 
\[mz_1 \geq 2r,\]
which is true if and only if 
\[z_0 m - r + m(z_1-z_0) \geq r.\]
Adding $dm$, we obtain
\[ z_0 m - r + (d+ z_1 - z_0)m \geq q,\]
which shows that $p_1 = d+ z_1 - z_0$ is the number of blue strands traversed.

Continuing further along the curve now traverses a single red strand, returning to the switch at $z_0 m - r + (d+ z_1 - z_0)m - q  = z_1m - 2r$.

In general, the $n$th time we repeat this process, we will begin at $z_{n-1}m - nr$, shift the indices by $m$ some number of times until the index reaches or exceeds $q$, and if $z_n$ is the smallest integer so that 
\[m z_n \geq (n+1)r,\]
then
\[z_{n-1}m - nr + (d + z_n - z_{n-1} )m \geq q\]
shows that $p_n = d + z_{n} - z_{n-1}$ is the number of blue strands traversed.

A similar analysis shows that $\widetilde{p}_n$ counts the number of purple strands traversed before a pink strand, now shifting the index by $q-m$ along each purple strand, in the curve $\varepsilon(K_2)$ (see Figure \ref{figure:switchK2}).

\begin{figure}[h]
\begin{center}

\resizebox{\textwidth}{!}{%

\begin{tikzpicture}

\begin{scope}[decoration={
    markings,
    mark=at position 0.4 with {\arrow{>}},
    mark=at position 0.6 with {\arrow{>}}}
    ] 

\foreach \i in {0, 1, 2, 3, 4}
    \draw[ultra thick, red, postaction={decorate}] (\i-8, 1) -- (\i-8, 0);

\end{scope}

\begin{scope}[decoration={
    markings,
    mark=at position 0.5 with {\arrow{>}}}
    ]

\foreach \i in {0, 1, 2, 3, 4}
    \draw[thick, blue, postaction={decorate}] (\i-8, 0) -- (\i-8, -1) node[black, anchor=north] {\i};
\foreach \i in {5, 6, 7, 8, 9, 10, 11}
    \draw[thick, blue, postaction={decorate}] (\i-8, 1) -- (\i-8, 0);
\foreach \i in {5, 6, 7, 8, 9, 10, 11}
    \draw[thick, blue, postaction={decorate}] (\i-8, 0) -- (\i-8, -1) node[black, anchor=north] {\i};
\foreach \i in {12, 13, 14, 15, 16}
    \draw[thick, blue, postaction={decorate}] (\i-8, 1) -- (\i-8, 0);

\end{scope}

\begin{scope}[decoration={
    markings,
    mark=at position 0.4 with {\arrow{>}},
    mark=at position 0.6 with {\arrow{>}}}
    ] 

\foreach \i in {12, 13, 14, 15, 16}
    \draw[ultra thick, red, postaction={decorate}] (\i-8, 0) -- (\i-8, -1) node[black, anchor=north] {\i};

\end{scope}

\draw[ultra thick, black] (-8.5, 0) -- (8.5, 0);

\draw [decorate,
    decoration = {brace}] (-8,1.5) --  (8,1.5);
\node[] at (0, 2) {$q+m$};

\draw [decorate,
    decoration = {brace, mirror}] (-8,-2) --  (-3,-2);
\node[] at (-5.5, -2.5) {$m$};

\draw [decorate,
    decoration = {brace, mirror}] (-3,-2) --  (2,-2);
\node[] at (-0.5, -2.5) {$m$};

\draw [decorate,
    decoration = {brace, mirror}] (2,-2) --  (7,-2);
\node[] at (4.5, -2.5) {$m$};

\draw [decorate,
    decoration = {brace, mirror}] (-6,-3) --  (5,-3);
\node[] at (-0.5, -3.5) {$d+1$};

\end{tikzpicture}
}
\end{center}

\caption{An example of a switch for $\alpha\beta\gamma \delta^{-1} \gamma \delta^{2}(K_1)$ with $q = 12$, and $m = 5$.}
\label{figure:switchK1}
\end{figure}
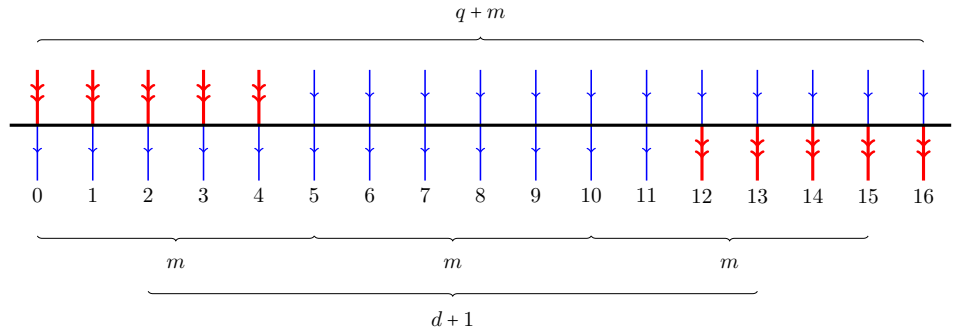

\begin{figure}[h]
\begin{center}

\resizebox{\textwidth}{!}{%

\begin{tikzpicture}

\begin{scope}[decoration={
    markings,
    mark=at position 0.4 with {\arrow{>}},
    mark=at position 0.6 with {\arrow{>}}}
    ]

\foreach \i in {0, 1, 2, 3, 4, 5, 6}
    \draw[ultra thick, magenta, postaction={decorate}] (\i-8, 1) -- (\i-8, 0);

\end{scope}

\begin{scope}[decoration={
    markings,
    mark=at position 0.5 with {\arrow{>}}}
    ] 
    
\foreach \i in {0, 1, 2, 3, 4, 5, 6}
    \draw[thick, violet, postaction={decorate}] (\i-8, 0) -- (\i-8, -1) node[black, anchor=north] {\i};
\foreach \i in {7, 8, 9, 10, 11}
    \draw[thick, violet, postaction={decorate}] (\i-8, 1) -- (\i-8, 0);
\foreach \i in {7, 8, 9, 10, 11}
    \draw[thick, violet, postaction={decorate}] (\i-8, 0) -- (\i-8, -1) node[black, anchor=north] {\i};
\foreach \i in {12, 13, 14, 15, 16, 17, 18}
    \draw[thick, violet, postaction={decorate}] (\i-8, 1) -- (\i-8, 0);
    
\end{scope}

\begin{scope}[decoration={
    markings,
    mark=at position 0.4 with {\arrow{>}},
    mark=at position 0.6 with {\arrow{>}}}
    ] 
    
\foreach \i in {12, 13, 14, 15, 16, 17, 18}
    \draw[ultra thick, magenta, postaction={decorate}] (\i-8, 0) -- (\i-8, -1) node[black, anchor=north] {\i};

\end{scope}

\draw[ultra thick, black] (-8.5, 0) -- (10.5, 0);

\draw [decorate,
    decoration = {brace}] (-8,1.5) --  (10,1.5);
\node[] at (0, 2) {$2q-m$};

\draw [decorate,
    decoration = {brace, mirror}] (-8,-2) --  (-1,-2);
\node[] at (-4.5, -2.5) {$q-m$};

\draw [decorate,
    decoration = {brace, mirror}] (-1,-2) --  (6,-2);
\node[] at (2.5, -2.5) {$q-m$};

\draw [decorate,
    decoration = {brace, mirror}] (-5,-3) --  (3,-3);
\node[] at (-1, -3.5) {$\widetilde{d}+1$};

\end{tikzpicture}
}
\end{center}

\caption{An example of a switch for $\varepsilon(K_2)$ with $q = 12$, and $q-m = 7$.}
\label{figure:switchK2}
\end{figure}
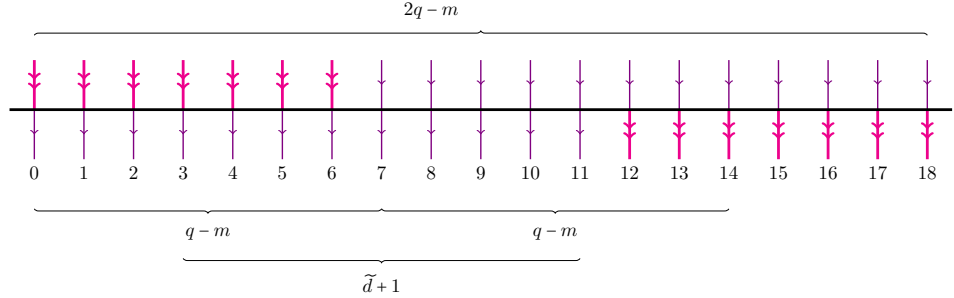

\begin{lem} \label{lem:nearpall} $p_0 = d+1$, $p_{m-1} = d$, and for all $1 \leq n \leq m-2$, 
\[p_n = p_{m-1 - n}.\]
\end{lem}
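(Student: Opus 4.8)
The plan is to replace the recursive description of the sequence $(p_n)$ by the closed form it encodes — namely $z_n = \lceil (n+1)r/m\rceil$, the least integer $z$ with $mz \ge (n+1)r$ — and then to reduce the whole lemma to one floor/ceiling reciprocity identity, after which the symmetry of $(p_n)$ falls out by a short telescoping manipulation.

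First I would dispose of the two boundary statements. Since $z_{-1} = \lceil 0\rceil = 0$ and, because $0 < r < m$, $z_0 = \lceil r/m\rceil = 1$, we get $p_0 = d + (z_0 - z_{-1}) = d+1$; and since $z_{m-1} = \lceil mr/m\rceil = r$ while $z_{m-2} = \lceil (m-1)r/m\rceil = \lceil r - r/m\rceil = r$ (using again $0 < r/m < 1$), we get $p_{m-1} = d + (z_{m-1} - z_{m-2}) = d$. These two facts are also recorded in Remark \ref{rem:ddplus1}.

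The heart of the argument is the identity
\[ z_n + z_{m-2-n} = r+1 \qquad \text{for all } 0 \le n \le m-2. \]
To prove it, set $a = (n+1)r/m$; then $(m-1-n)r/m = r - a$, so $z_{m-2-n} = \lceil r - a\rceil = r - \lfloor a\rfloor$, and since $\gcd(r,m)=1$ and $0 < n+1 < m$, the number $a$ is not an integer, so $\lfloor a\rfloor = \lceil a\rceil - 1 = z_n - 1$; combining these gives the identity. With it in hand, for $1 \le n \le m-2$ I would write
\[ p_n - p_{m-1-n} = (z_n - z_{n-1}) - (z_{m-1-n} - z_{m-2-n}) = \bigl(z_n + z_{m-2-n}\bigr) - \bigl(z_{n-1} + z_{m-1-n}\bigr), \]
and apply the reciprocity identity to index $n$ in the first bracket and to index $n-1$ in the second bracket — both indices lie in the admissible range $\{0,\dots,m-2\}$ precisely because $1 \le n \le m-2$ — so each bracket equals $r+1$ and the difference vanishes.

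The only real care needed is the bookkeeping of index ranges: the reciprocity identity holds exactly for $0 \le n \le m-2$, which is why the symmetry statement must exclude $n=0$ and $n=m-1$ (where, as computed above, $p_0 = d+1 \neq d = p_{m-1}$ in general), and the non-integrality of $(n+1)r/m$ — the one place where $\gcd(q,m) = \gcd(r,m) = 1$ is genuinely used — must be verified to license the step $\lfloor\,\cdot\,\rfloor = \lceil\,\cdot\,\rceil - 1$. Beyond that the proof is a routine computation with ceilings, and there is no deeper obstacle.
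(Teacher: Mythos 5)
Your proof is correct, and it takes a genuinely different route from the paper's. The paper argues geometrically: it uses the interpretation of $p_n$ as the number of blue strands traversed between consecutive red strands, derives $p_{m-1}=d$ from how the curve closes up at the switch, and obtains the symmetry $p_n = p_{m-1-n}$ by comparing a forward traversal of the curve (starting at index $m-r$) with a backward traversal (starting at index $q+r$), observing that these two starting indices are symmetric about $\frac{q+m}{2}$ so that cycling by $m$ in either direction mirrors the other. You instead work purely arithmetically from the definition $z_n = \lceil (n+1)r/m\rceil$, isolating the reciprocity identity $z_n + z_{m-2-n} = r+1$ for $0 \le n \le m-2$ (where the coprimality $\gcd(r,m)=1$ is used exactly once, to rule out integrality of $(n+1)r/m$) and then telescoping; you also get $p_{m-1}=d$ by direct computation of $z_{m-1}$ and $z_{m-2}$ rather than from the closing-up of the curve. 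Your version is more self-contained and makes the role of coprimality explicit, at the cost of the geometric insight that the palindromic structure of $(p_n)$ reflects the reversibility of the curve, which is the theme the paper is developing toward Lemma \ref{lem:cyclicpallindrome}. (Both your argument and the paper's implicitly assume $r>0$, i.e.\ $m\geq 2$, for the boundary values $p_0$ and $p_{m-1}$; this is consistent with the paper's standing hypotheses.)
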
 

\begin{proof}
That $p_0 = d+1$ is explained in Remark \ref{rem:ddplus1}. That $p_{m-1} = d$ can be seen from the discussion above that $p_n$ counts the number of times blue strands are traversed before crossing over to a red strand. When this process of traversing the stands ends, and the curve closes up, it is because the sequence of blue strands terminates on $q$, after which the left-most red strand on the bottom connects to the left-most red strand on top. This must mean that the initial strand after the $(m-1)$st cycle was $r$, so that the last cycle adds exactly $d$ copies of $m$ to end on $r + dm = q$.

Now, the rest of the equations follow from a symmetry in traversing the curve forwards or backwards. On the one hand, we can follow the curve for one full cycle, traversing $p_0 = d+1$ blue stands, then one red strand, ending at index $m-r$. On the other hand, if we had followed the curve with the reverse orientation, traversing first the red strand connecting index $0$ to index $q$, then $p_{m-1} = d$ blue strands, and then another red strand, we end at index $q + r$. Now, $m-r + pm \geq q$ if and only if $q + r -pm \leq m$, which shows that $p_1 = p_{m-2}$. In fact, as $m-r$ and $q + r$ are symmetric about $\frac{q + m}{2}$, any amount of cycling forward from $m-r$ by $m$, mod $q+m$, will be mirrored by cycling backward from $q+r$ by $m$, mod $q+m$, so that $(m-r + pm - nq ) \geq q$ if and only if $q + r - pm + nq \leq m$, and $p_n$ will always match $p_{m - 1 - n}$ for $1 \leq n \leq m-2$.
\end{proof}

\begin{lem}
\label{lem:pptilde}
For $m < \frac{q}{2}$, we have the following:
\begin{enumerate}
\item $\widetilde{p}_n = 2$ precisely when $n = 0$ or  \[n = (p_0 + p_1 + \cdots + p_j - (j+2))\] for $j = 0, 1, \dots, (m-2)$.
\item If $\left( \widetilde{p}_{n_i} \right)$ for $0 \leq i \leq m-1$ is the subsequence of $2$'s,
then \[p_0 = 1 + (q-m) - n_{m -1},\]
and for all $0 \leq i \leq m-2$, 
\[p_{m -1- i} = 1 + (n_{i+1} - n_{i}).\] In other words, the sequence $\left(p_n \right)$ is the reverse of the sequence whose terms are the sum of each 2 in $\left(\widetilde{p}_n \right)$ with all subsequent 1's before the next 2.
\end{enumerate}

\end{lem}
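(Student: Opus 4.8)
The plan is to prove part (2) by a direct computation with floor and ceiling functions, pinning down explicitly the positions of the $2$'s in $(\widetilde{p}_n)$, and then to deduce part (1) from part (2) together with Lemma~\ref{lem:nearpall}.

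\textbf{Setting up.} Since $m < \tfrac q2$ we have, as recorded before the lemma, $\widetilde{d} = 1$ and $\widetilde{r} = m$, so $\widetilde{z}_n = \lceil (n+1)m/(q-m)\rceil$ and $\widetilde{p}_n = 1 + (\widetilde{z}_n - \widetilde{z}_{n-1}) \in \{1,2\}$; also $q = dm + r$ with $d \ge 2$, $1 \le r < m$, and $\gcd(m,r) = \gcd(q,m) = 1$. Because $\gcd(q-m,m) = 1$, for $0 \le i \le m-1$ the number $i(q-m)/m$ is an integer exactly when $i = 0$, so the least index $n$ with $\widetilde{z}_n \ge i+1$ is
\[ n_i := \left\lfloor \frac{i(q-m)}{m} \right\rfloor = i(d-1) + \left\lfloor \frac{ir}{m}\right\rfloor, \qquad 0 \le i \le m-1 . \]
As $m/(q-m) < 1$, $\widetilde{z}$ increases from $\widetilde{z}_{-1} = 0$ to $\widetilde{z}_{q-m-1} = m$ by jumps of size $1$, so $0 = n_0 < n_1 < \cdots < n_{m-1}$ are precisely the indices at which $\widetilde{z}$ jumps, i.e.\ the positions of the $2$'s in $(\widetilde{p}_n)$, and there are exactly $m$ of them.

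\textbf{Proof of (2).} For $0 \le i \le m-2$ the display above gives $n_{i+1} - n_i = (d-1) + \lfloor (i+1)r/m\rfloor - \lfloor ir/m\rfloor$. On the other hand, from $p_n = d + \lceil (n+1)r/m\rceil - \lceil nr/m\rceil$ and the identity $\lceil r - x\rceil = r - \lfloor x\rfloor$ (valid since $r \in \mathbb{Z}$), applied to $(m-1-i)r/m = r - (i+1)r/m$ and $(m-i)r/m = r - ir/m$, one obtains, for $0 \le i \le m-1$,
\[ p_{m-1-i} = d + \left\lfloor \frac{(i+1)r}{m}\right\rfloor - \left\lfloor \frac{ir}{m}\right\rfloor , \]
so $p_{m-1-i} = 1 + (n_{i+1}-n_i)$ for $0 \le i \le m-2$. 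For the remaining identity, $n_{m-1} = (m-1)(d-1) + (r-1)$ (as $0 < r < m$), and using $q-m = (d-1)m + r$ a short computation gives $1 + (q-m) - n_{m-1} = d+1 = p_0$, the last equality by Remark~\ref{rem:ddplus1}. This establishes (2).

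\textbf{Proof of (1), and the main difficulty.} Telescoping and applying (2), for $0 \le j \le m-2$,
\[ n_{j+1} = \sum_{i=0}^{j} (n_{i+1}-n_i) = \sum_{i=0}^{j} \big(p_{m-1-i} - 1\big) . \]
By Lemma~\ref{lem:nearpall}, $p_{m-1} = d = p_0 - 1$ and $p_{m-1-i} = p_i$ for $1 \le i \le m-2$, so $\sum_{i=0}^{j} p_{m-1-i} = \big(\sum_{i=0}^{j} p_i\big) - 1$, whence $n_{j+1} = p_0 + p_1 + \cdots + p_j - (j+2)$; together with $n_0 = 0$ this lists all positions of $2$'s, which is (1). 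The only real obstacle is bookkeeping: getting the strict-versus-weak ceiling inequalities right when locating $n_i$, handling the boundary index $n = q-m-1$ where the argument of the ceiling becomes an integer, and keeping the first block ($i = 0$, governed by Remark~\ref{rem:ddplus1}) and the last block straight so that the index ranges in the statement match up. There is no conceptual difficulty beyond these elementary estimates.
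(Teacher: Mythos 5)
Your proof is correct, but it runs in the opposite direction from the paper's and rests on a different computational engine. The paper establishes part (1) first: it extracts the inequality $(p_0+\cdots+p_j-1)m < (j+1)q \le (p_0+\cdots+p_j)m$ from the definition of $(z_n)$ (phrased via the strand-tracing picture), converts it to a statement about multiples of $q-m$ versus multiples of $m$, reads off the values of $\widetilde{z}$ at the indices $p_0+\cdots+p_j-(j+2)$, and separately checks that $\widetilde{p}_n=1$ strictly between these indices; part (2) then follows by the same telescoping-plus-Lemma~\ref{lem:nearpall} computation you use, just run the other way. You instead prove part (2) first by writing everything in closed form: $n_i=\lfloor i(q-m)/m\rfloor$ for the jump positions of $\widetilde{z}$ and $p_n=d+\lceil (n+1)r/m\rceil-\lceil nr/m\rceil$, with the reflection identity $\lceil a-x\rceil=a-\lfloor x\rfloor$ doing the work of matching $p_{m-1-i}$ to the gap $n_{i+1}-n_i$; you then deduce part (1) by telescoping. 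Your route is arguably tighter: the explicit Beatty-type formulas make the location of the $2$'s and the "at most one jump per step" claim immediate, and your reflection computation is in effect an independent arithmetic proof of the symmetry that Lemma~\ref{lem:nearpall} obtains combinatorially (you only invoke that lemma to convert $\sum p_{m-1-i}$ into $\sum p_i$ when deriving part (1)). What the paper's version buys is continuity with the surrounding geometric discussion — the key inequality is literally the statement that $p_0+\cdots+p_j$ strands have been traversed after $j+1$ returns — at the cost of more delicate index bookkeeping. One shared caveat: both arguments implicitly use $0<r<m$, which silently excludes $m=1$ (where $r=0$); this is inherited from the paper's parenthetical "$0<r$ since $\gcd(q,m)=1$" and is not a defect of your write-up relative to the paper's.
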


\begin{proof}
Observe that $\widetilde{p}_0 = 2$. Then, notice that by cycling through strands, for each $j$,
\[(p_0 + p_1 + \cdots + p_j - 1)m < (j+1)q \leq (p_0 + p_1 + \cdots + p_j)m,\]
which is equivalent to
\begin{equation} \tag{$j$} (p_0 + p_1 + \cdots + p_j - (j+2))m < (j+1)(q-m) \leq (p_0 + p_1 + \cdots + p_j - (j+1))m. \end{equation}

Since $d > 1$, each $p_j \geq 2$, so these sums, $(p_0 + p_1 + \cdots + p_j - (j+2))$, will be strictly increasing with $j$. The inequality $(j)$ says precisely that $(j+1)$ multiples of $(q-m)$ are greater than the $(p_0 + p_1 + \cdots + p_j - (j+3) + 1)$th multiple of $m$, but not greater than the next. This means that $\widetilde{z}_{(p_0 + p_1 + \cdots + p_j - (j+3))} = j+1$, while $\widetilde{z}_{(p_0 + p_1 + \cdots + p_j - (j+2))} = j+2$, so that $\widetilde{p}_{(p_0 + p_1 + \cdots + p_j - (j+2))} = 1 + (j+2) - (j+1) = 2$. 

On the other hand, say $n$ is between $n_j = (p_0 + p_1 + \cdots + p_j - (j+2))$ and $n_{j+1} = (p_0 + p_1 + \cdots + p_j + p_{j+1} - (j+3))$. In this case, we have $\widetilde{z}_{n_j} = j+2$, $\widetilde{z}_{n_{j+1} - 1} = j+2$, and $\widetilde{z}_{n_{j+1}} = j+3$. By definition of $\widetilde{z}$, this says that
\[ ((n_{j+1}-1) + 1)m \leq (q-m)(j+2) < (n_{j+1} + 1)m \leq (q-m)(j+3).\]
But then $n \leq n_{j+1} -1 < n_{j+1}$, which means that
\[ (n+1)m \leq ((n_{j+1}-1) + 1)m \leq (q-m)(j+2),\]
which establishes that $\widetilde{z}_{n} \leq j+2$. As $\widetilde{z}$ is a non-decreasing sequence, and $n_j \leq n-1 < n$ this shows that $\widetilde{p}_n = 1 + (\widetilde{z}_n - \widetilde{z}_{n-1}) = 1 + ((j+2) - (j+2)) = 1 \neq 2$.

To prove the second statement, observe that by the statement above, for each $1 \leq i \leq m-2$, 
\begin{align*} 1+ n_{i+1} - n_i && = & 1 + (p_0 + p_1 + \cdots + p_i - (i+2)) - (p_0 + p_1 + \cdots + p_{i-1} - (i+1)) \\
&& = &\qquad p_{i}.
\end{align*}
By Lemma \ref{lem:nearpall}, this is equal to $p_{m-1-i}$.

For $i = 0$, we get
\[1 + n_1 - n_0 = 1 + (p_0 - 2) - 0 = p_0 - 1 = p_m,\]
by Lemma \ref{lem:nearpall} again.

And finally, 
\begin{align*} 1+ (q-m) - n_{m-1} & = & 1 + (q-m) - (p_0 + p_1 + \cdots + p_{m-2} - (m-2+2)) \\
& = & 1 + (q-m) - (p_0 + p_1 + \cdots + p_{m-2} + p_{m-1} - m) + p_{m-1}\\
& = & 1 + (q-m) - (q - m) + p_{m-1}\\
& = & 1 + p_{m-1}\\
& = & p_0.
\end{align*} 
\end{proof}

\begin{rem} \label{rem:exchange} A similar statement then holds if $m > \frac{q}{m}$, exchanging the roles of $p$ and $\widetilde{p}$. \end{rem}

We now begin showing that $g(K_1)$ and $\varepsilon(K_2)$ represent the same free homotopy elements in both handlebodies, $H'$ and $H$, bounded by $F$. For a curve $K$ on $F$, let $[K]_{H'}$ and $[K]_{H}$ represent the free homotopy classes of $K$ in the fundamental groups $\pi_1(H')$ and $\pi_1(H)$, respectively.

For the inside handlebody, $H$, using the standard cut system defined by the curves $\textbf{a}$ and $\textbf{x}$, we get a word in $\pi_1(H)$. For convenience, we will call $A$ and $X$ the corresponding generators of $\pi_1(H)$, we will denote inverses by $\overline{A}$ and $\overline{X}$, respectively, so that each time a curve crosses $\textbf{a}$ with a positive orientation, we contribute an $A$ to the word representing the curve.

For the outside handlebody $H'$, we will similarly use the curves $\textbf{b}$ and $\textbf{y}$, and call $B$ and $Y$ the corresponding generators for $\pi_1(H')$.

Considering first the inside handlebody $H$, each blue strand contributes an $\overline{A}X$, and each red strand contributes an $A$, so that the word $[g(K_1)]_{H}$ will be
\begin{eqnarray*} &  A\left(\overline{A}X\right)^{p_{m-1}} A\left(\overline{A}X\right)^{p_{m-2}} \cdots A\left(\overline{A}X\right)^{p_{1}} A\left(\overline{A}X\right)^{p_{0}} \\
 = & X\left(\overline{A}X\right)^{p_{m-1}-1} X\left(\overline{A}X\right)^{p_{m-2}-1} \cdots X\left(\overline{A}X\right)^{p_{1}-1} X\left(\overline{A}X\right)^{p_{0}-1}. \end{eqnarray*}

Observe that regardless of the values of $p(q, m)$, this will always be a word with a single $X$ on the left, followed by an alternating pattern of $\overline{A}$ and some powers of $X$, ending on $(\overline{A}X)^{p_{0}-1}$ on the right. We will eventually simplify and standardize this word by cycling the right-most power of $X$ to the left, and record the powers of $X$ as a sequence $\left(\chi^1_n\right)$.

\begin{lem} \label{lem:chiptilde} $\chi^1_n = \widetilde{p}_n$ for all $0 \leq n \leq q-m-1$.
\end{lem}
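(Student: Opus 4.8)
The plan is to make the word $[g(K_1)]_H$ completely explicit, read its sequence of $X$-exponents directly off the data $(p_n)$, and then identify that sequence with $(\widetilde p_n)$ by combining the location of the $2$'s in $(\widetilde p_n)$ supplied by Lemma \ref{lem:pptilde} with the near-palindromic structure of $(p_n)$ from Lemma \ref{lem:nearpall}. I will carry this out under the standing assumption $m < \frac{q}{2}$ (so $d \geq 2$ and every $p_n \geq 2$); the case $m > \frac{q}{2}$ is entirely parallel, with the roles of $(p_n)$ and $(\widetilde p_n)$ interchanged as in Remark \ref{rem:exchange}.

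First I would pin down the shape of the word $W := [g(K_1)]_H$. By the displayed computation just above, $W$ is the cyclic concatenation $B_{m-1}B_{m-2}\cdots B_1 B_0$ of the blocks $B_j = X(\overline A X)^{p_j-1}$. Because $p_j \geq 2$, no block degenerates to a single letter, so in $W$ every $\overline A$ is isolated and the maximal runs of $X$'s come in exactly two flavors: a run of length $2$ at each of the $m$ cyclic junctions $B_j\,|\,B_{j-1}$, where the trailing $X$ of $B_j$ meets the leading $X$ of $B_{j-1}$, and a run of length $1$ at each of the $\sum_j(p_j-2)$ slots interior to a block. The normalization ``cycle the right-most power of $X$ to the left'' carries the trailing $X$ of $B_0$ in front of the leading $X$ of $B_{m-1}$; that is, it leaves a cyclic word of the form $X^{\chi^1_0}\overline A X^{\chi^1_1}\overline A \cdots X^{\chi^1_{q-m-1}}\overline A$ whose origin sits squarely on the junction $B_0\,|\,B_{m-1}$. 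Recalling that $\sum_{j=0}^{m-1} p_j = q$ (there being $q$ blue arcs across the switch, each traversed once), we get
\[ (\chi^1_n)_{n=0}^{q-m-1} \;=\; \bigl(\,2,\; 1^{\,p_{m-1}-2},\; 2,\; 1^{\,p_{m-2}-2},\; \dots,\; 2,\; 1^{\,p_{0}-2}\,\bigr), \]
a list of $m + \sum_j(p_j-2) = \sum_j p_j - m = q-m$ entries, each $1$ or $2$, whose $2$'s sit exactly at $\pi_0 = 0$ and $\pi_i = \bigl(\sum_{l=1}^{i} p_{m-l}\bigr) - i$ for $1 \leq i \leq m-1$.

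On the other hand, $\widetilde d = 1$, so $\widetilde p_n \in \{1,2\}$ for every $n$, and Lemma \ref{lem:pptilde}(1) places the $2$'s of $(\widetilde p_n)_{n=0}^{q-m-1}$ exactly at $\rho_0 = 0$ and $\rho_i = \bigl(\sum_{l=0}^{i-1} p_l\bigr) - (i+1)$ for $1 \leq i \leq m-1$. Since both sequences have $q-m$ terms and every non-$2$ entry of each is a $1$, the lemma reduces to the single identity $\pi_i = \rho_i$, i.e. $\sum_{l=1}^{i} p_{m-l} = \sum_{l=0}^{i-1} p_l - 1$. This is where Lemma \ref{lem:nearpall} does the work: it gives $p_{m-1} = d$, $p_0 = d+1$, and $p_{m-l} = p_{l-1}$ for $2 \leq l \leq i$, so the left side collapses to $d + \sum_{k=1}^{i-1} p_k$ while the right side is $(d+1) + \sum_{k=1}^{i-1} p_k - 1$, the same number.

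I expect the main obstacle to be the bookkeeping rather than any conceptual leap. One must verify that the prescribed normalization really does pin the origin of $(\chi^1_n)$ at the $B_0\,|\,B_{m-1}$ junction, so that the claimed equality holds for indexed sequences and not merely for cyclic words; and one must then correctly align the two position lists $(\pi_i)$ and $(\rho_i)$ — one built from partial sums of $(p_n)$ read backwards, the other from partial sums read forwards. The near-palindrome identities of Lemma \ref{lem:nearpall} are precisely the device that reconciles the two reading directions, so the whole argument turns on applying them in the telescoped form displayed above.
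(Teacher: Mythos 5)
Your argument for the case $m < \frac{q}{2}$ (equivalently $d \geq 2$) is correct and is essentially the paper's own ``Case 1'': you read off the positions of the $2$'s in $\left(\chi^1_n\right)$ from the block decomposition, the paper lists the same positions $0, \, p_{m-1}-1, \, p_{m-1}+p_{m-2}-2, \dots$, and both arguments then use Lemma \ref{lem:nearpall} to convert these backward partial sums into the forward partial sums appearing in Lemma \ref{lem:pptilde}(1). Your telescoped identity $\sum_{l=1}^{i} p_{m-l} = \sum_{l=0}^{i-1}p_l - 1$ is a slightly cleaner packaging of that conversion, and your count of $q-m$ entries via $\sum_j p_j = q$ is right.

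However, dismissing the case $m > \frac{q}{2}$ as ``entirely parallel'' is a genuine gap. In that case $d = 1$, so some $p_j = 1$ and the block $B_j = X\left(\overline{A}X\right)^{p_j-1}$ degenerates to a single letter $X$; consecutive degenerate blocks then merge into runs of $X$'s of length greater than $2$. Your first structural claim --- that the maximal runs of $X$'s come in exactly two flavors, length $2$ at junctions and length $1$ inside blocks --- is false here, the explicit list $\left(2,\, 1^{p_{m-1}-2},\, \dots\right)$ contains negative exponents, and the target sequence $\left(\widetilde{p}_n\right)$ is now $\{\widetilde{d}, \widetilde{d}+1\}$-valued with $\widetilde{d} \geq 2$, so the entire ``match the positions of the $2$'s in two $\{1,2\}$-valued sequences'' framework collapses. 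It is not obtained from your argument by interchanging $p$ and $\widetilde{p}$; one must instead show that $\left(\chi^1_n\right)$ is the reverse of the sequence obtained by summing each $2$ in $\left(p_n\right)$ with its trailing $1$'s, and then invoke the \emph{second} statement of Lemma \ref{lem:pptilde} via Remark \ref{rem:exchange}. That is the content of the paper's ``Case 2,'' and you need to supply an argument of this kind rather than a parallelism claim.
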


\begin{proof}

We will treat two cases separately. They are in some very strong sense dual cases, but the analysis required is slightly different in each case.

\textbf{Case 1: $d > 1$}

In this case, all of the $p_n \geq 2$, so the powers appearing in the word $[g(K_1)]_{H}$ are all at least 1, and $\chi^1_n \in \set{1, 2}$ for all $n$, and the word becomes,
\begin{small}
\begin{gather*} X\left(\overline{A}X\right)^{p_{m-1}-1} X\left(\overline{A}X\right)^{p_{m-2}-1} \cdots X\left(\overline{A}X\right)^{p_{1}-1} X\left(\overline{A}X\right)^{p_{0}-1} \\
=  X\left(\overline{A}X\right)^{p_{m-1}-2} \left(\overline{A}X\right) X\left(\overline{A}X\right)^{p_{m-2}-2} \left(\overline{A}X\right) \cdots X\left(\overline{A}X\right)^{p_{1}-2} \left(\overline{A}X\right) X\left(\overline{A}X\right)^{p_{0}-2} \left(\overline{A}X\right) \\
=  X\left(\overline{A}X\right)^{p_{m-1}-2} \left(\overline{A}X^2\right) \left(\overline{A}X\right)^{p_{m-2}-2} \left(\overline{A}X^2\right) \cdots \left(\overline{A}X\right)^{p_{1}-2} \left(\overline{A}X^2\right) \left(\overline{A}X\right)^{p_{0}-2} \left(\overline{A}X\right). \end{gather*} 
\end{small}

Analyzing the word after the right-most $X$ is cycled to the left, we can see that $\chi^1_n = 2$ exactly when $n$ is equal to
\begin{flalign*}
& 0,\\
& p_{m-1}    -1, \\
& p_{m-1} + p_{m-2}   - 2, \\
& p_{m-1} + p_{m-2} + p_{m-3}   - 3,\\
& \vdots\\
& p_{m-1} + p_{m-2} + \cdots + p_2 - (m-2), \mbox{ or}\\
& p_{m-1} + p_{m-2} + \cdots + p_3 + p_1 - (m-1).\\
\end{flalign*}

In light of Lemma \ref{lem:nearpall}, this is the same as $n$ equal to
\begin{align*} 
& 0, \\
& p_0 - 2,\\
& p_0 + p_1 - 3,\\
& p_0 + p_1 + p_2 - 4,\\
& \vdots \\
& p_0 + p_1 + \cdots p_j - (j+2),\\ 
& \vdots\\
& p_0 + p_1 + \cdots + p_{m-3} - (m-1), \mbox{ or}\\
& p_0 + p_1 + \cdots + p_{m-2} + p_{m-2} - m.\\
\end{align*}

This is precisely when $\widetilde{p}_n = 2$ according to Lemma \ref{lem:pptilde}.

\textbf{Case 2: $d=1$}

Now $p_n \in \set{1, 2}$ for all $0 \leq n \leq m-1$. Suppose $\left( p_{n_i} \right)$ for $0 \leq i \leq q-m-1$ is the subsequence of $2$'s. We note that $X\left(\overline{A}X\right)^{0} = X$, while $X\left(\overline{A}X\right)^{1} = X\overline{A}X$, so for all $1 \leq i \leq q-m-1$, 
\[\chi^1_0 = 1 + m - n_{q-m -1},\]
and 
\[\chi^1_{q - m - i} = 1 + (n_{i} - n_{i-1}).\]

In other words, the sequence $\left(\chi^1_n\right)$ is the reverse of the sequence whose terms are the sum of each 2 in $\left(p_n \right)$ with all subsequent 1's before the next 2.

This is precisely $\widetilde{p}_n$ by the second statement from, and Remark \ref{rem:exchange} following, Lemma \ref{lem:pptilde}.
\end{proof}

Turning now to the other curve, we have a much more straightforward relationship between the word and the sequence $\widetilde{p}_n(q, m)$. Each purple strand contributes an $X$, and each pink strand contributes an $\overline{A}$, so that the word $[\varepsilon(K_2)]_{H}$ will be
\[ \overline{A}X^{\widetilde{p}_{q-m-1}} \overline{A}X^{\widetilde{p}_{q-m-2}} \cdots \overline{A}X^{\widetilde{p}_{1}} \overline{A}X^{\widetilde{p}_{0}}. \]

This will always be a word with a single $\overline{A}$ on the left, followed by an alternating pattern of some powers of $X$ and $\overline{A}$. Again, we will simplify and standardize this word, this time by cycling the left-most $\overline{A}$ to the right, so as to match the form of $[g(K_1)]_{H}$, and record the powers of $X$ as a sequence $\left(\chi^2_n\right)$. It is apparent that $\left( \chi^2_n \right)$ is precisely the reverse of $\left( \widetilde{p}_n \right)$.

Then Lemma \ref{lem:chiptilde} shows that the words $[g(K_1)]_{H}$ and $[\varepsilon(K_2)]_{H}$ are mirror images of each other, so by Lemma \ref{lem:cyclicpallindrome}, $g(K_1)$ and $\varepsilon(K_2)$ represent the same free homotopy class in $\pi_1(H)$.

Next, we consider the outside handlebody $H'$. Here, each blue strand of $g(K_1)$ contributes an $BY^2$, and each red strand contributes a $\overline{Y}\overline{B}$, so that the word $[g(K_1)]_{H'}$ will be
\begin{eqnarray*} &  \overline{Y}\overline{B}(BY^2)^{p_{m-1}} \overline{Y}\overline{B}(BY^2)^{p_{m-2}} \cdots \overline{Y}\overline{B}(BY^2)^{p_{1}} \overline{Y}\overline{B}(BY^2)^{p_{0}} \\
 = & Y(BY^2)^{p_{m-1}-1} Y(BY^2)^{p_{m-2}-1} \cdots Y(BY^2)^{p_{1}-1} Y(BY^2)^{p_{0}-1}. \end{eqnarray*}

Observe that regardless of the values of $(p_n)$, this will always be a word with a power of $Y$ on the left, followed by an alternating pattern of $B$ and some powers of $Y$, ending on $Y^2$ on the right. We will simplify and standardize this word by cycling the right-most $Y^2$ to the left, and record the powers of $Y$ as a sequence $\left(\Upsilon^1_n\right)$.

\begin{lem} \label{lem:upsilonptilde} $\Upsilon^1_n = \widetilde{p}_n + 1$ for all $0 \leq n \leq q-m-1$.
\end{lem}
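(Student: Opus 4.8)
The plan is to deduce this from Lemma~\ref{lem:chiptilde} by comparing the word $[g(K_1)]_{H'}$ directly with $[g(K_1)]_{H}$, rather than re-tracing the curve through the switch. Recall the two displayed forms
\[ [g(K_1)]_{H} = X\left(\overline{A}X\right)^{p_{m-1}-1} \cdots X\left(\overline{A}X\right)^{p_0-1}, \qquad [g(K_1)]_{H'} = Y\left(BY^2\right)^{p_{m-1}-1} \cdots Y\left(BY^2\right)^{p_0-1}. \]
First I would observe that $[g(K_1)]_{H'}$ is the image of $[g(K_1)]_{H}$ under the letter substitution $\sigma$ determined by $X \mapsto Y$ and $\overline{A} \mapsto BY$, since $\sigma\!\left(X(\overline{A}X)^{p-1}\right) = Y(BY^2)^{p-1}$. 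Both words are positive and already reduced in the two-letter alphabets $\{X,\overline{A}\}$ and $\{B,Y\}$ respectively, so $\sigma$ is nothing more than a letter-by-letter replacement, and $[g(K_1)]_{H'} = \sigma\!\left([g(K_1)]_{H}\right)$ as reduced cyclic words.

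Next I would analyze the effect of $\sigma$ on maximal runs. View the cyclic word $[g(K_1)]_{H}$ as an alternating sequence of single $\overline{A}$'s and maximal $X$-runs; there are exactly $q-m$ of each (the $\overline{A}$'s number $\sum_{k}(p_k-1) = q-m$). Since $\overline{A} \mapsto BY$ emits one $B$ followed by one $Y$, while each $X$ in the run following that $\overline{A}$ (up to the next $\overline{A}$) emits a single $Y$, the maximal $Y$-run of $\sigma\!\left([g(K_1)]_{H}\right)$ immediately after the $n$-th $B$ has length exactly one more than the $X$-run after the $n$-th $\overline{A}$. Thus $\sigma$ carries the cyclic sequence of $X$-run lengths of $[g(K_1)]_{H}$ to the cyclic sequence of $Y$-run lengths of $[g(K_1)]_{H'}$, each incremented by $1$, in the same cyclic order; note this holds uniformly in $d$, so no case division (as in the proof of Lemma~\ref{lem:chiptilde}) is needed here.

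It then remains to check that the two normalizations used to define $(\chi^1_n)$ and $(\Upsilon^1_n)$ pick out corresponding starting runs. Because $p_0 = d+1 \geq 2$, the word $[g(K_1)]_{H}$ ends in $\overline{A}X$, and its $\sigma$-image ends in $BY^2$, whose two $Y$'s are the $Y$ emitted by the terminal $\overline{A}$ and the $Y$ emitted by the terminal $X$. Hence cycling the right-most $X$ of $[g(K_1)]_{H}$ to the front corresponds under $\sigma$ to cycling the right-most $Y^2$ of $[g(K_1)]_{H'}$ to the front, and after these cyclings both standardized words begin at the (corresponding) maximal run straddling the junction between block $0$ and block $m-1$. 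Reading run lengths off from there yields $\Upsilon^1_n = \chi^1_n + 1$ for all $0 \leq n \leq q-m-1$, and Lemma~\ref{lem:chiptilde} ($\chi^1_n = \widetilde{p}_n$) then finishes the argument.

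The main obstacle I anticipate is exactly this last bookkeeping step: the two "standardize" recipes (cycle the right-most $X$; cycle the right-most $Y^2$) are stated in slightly different ad hoc terms, so one must be careful that the index $n$ labels the same run on both sides — an off-by-one or a mismatched starting point could easily creep in. Once the run-by-run correspondence induced by $\sigma$ is in place, however, this reduces to the short verification sketched above.
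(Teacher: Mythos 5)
Your argument is correct and is essentially the paper's own proof: the paper likewise observes that $[g(K_1)]_{H'}$ has the same structure as $[g(K_1)]_{H}$ with each power of the letter bumped up by one (and $B$ in place of $\overline{A}$), then invokes $\chi^1_n = \widetilde{p}_n$. Your write-up via the substitution $X \mapsto Y$, $\overline{A} \mapsto BY$ just makes the run-length bookkeeping and the matching of the two cycling normalizations explicit, which the paper leaves implicit.
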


\begin{proof}
The word $[\varepsilon(K_2)]_{H'}$ has exactly the same structure as $[g(K_1)]_{H}$, with $Y^3$ in place of $X^2$, and $B$ in place of $\overline{A}$, so $\Upsilon^1_n = \widetilde{p}_n + 1$ for all $0 \leq n \leq q-m-1$.
\end{proof}

And again, the word representing the second curve, $\varepsilon(K_2)$, has a much more straightforward relationship with the sequence $\left( \widetilde{p}_n \right)$. Each purple strand contributes a $Y$, and each pink strand contributes a $YB$, so that the word $[\varepsilon(K_2)]_{H'}$ will be
\begin{eqnarray*} & YBY^{\widetilde{p}_{q-m-1}} YBY^{\widetilde{p}_{q-m-2}} \cdots YBY^{\widetilde{p}_{1}} YBY^{\widetilde{p}_{0}} \\
= & YBY^{\widetilde{p}_{q-m-1}+1} BY^{\widetilde{p}_{q-m-2}+1} \cdots BY^{\widetilde{p}_{1}+1} BY^{\widetilde{p}_{0}}.
\end{eqnarray*}

Cycling the left-most $YB$ to the right, we obtain an alternating pattern of $B$ and powers of $Y$, starting with a power of $Y$ on the left and ending in a $B$, matching the form of $[g(K_1)]_{H'}$, and we'll record the powers of $Y$ as $\left(\Upsilon^2_n\right)$. It is clear that $\left( \Upsilon^2_n \right)$ is the reverse of the sequence $\left( \widetilde{p}_n + 1\right)$. Then, by Lemmas \ref{lem:upsilonptilde} and \ref{lem:cyclicpallindrome}, $g(K_1)$ and $\varepsilon(K_2)$ represent the same free homotopy class in $\pi_1(H')$.

\subsubsection{The homotopy obstruction is conclusive}
\label{subsub:HomotopyConclusive}

It is worth noting that Dehn twisting $g(K_1)$, say, around any separating curve that is \emph{not} a reducing sphere curve will provide examples of curves that are still homologous, but are not Goeritz equivalent due to Theorem \ref{thm:Homotopy}'s obstruction.
  
It would be ideal if the homotopy conditions of Theorem \ref{thm:Homotopy} were sufficient to conclude Goeritz equivalence as well.

\begin{question} Suppose $K_1$ and $K_2$ are homologous simple closed curves with non-zero surface slopes. Are $K_1$ and $K_2$ Goeritz equivalent if and only if $K_1$ and $K_2$ are freely homotopic in $H$, and freely homotopic in $H'$?
\end{question}

%
%
%
\bibliographystyle{gtart}
\bibliography{Genus2GoeritzEquivalence}
%

%

\end{document}